\def\@strippedMR{}
\def\@scanforMR#1#2#3\endscan{
  \ifx#1M\ifx#2R\def\@strippedMR{#3}
  \else\def\@strippedMR{#1#2#3}
  \fi\fi}
\renewcommand\MR[1]{\relax\ifhmode\unskip\spacefactor3000 \space\fi
  \@scanforMR#1\endscan
  MR\MRhref{\@strippedMR}{\@strippedMR}}
\newtheorem*{Thm*}{Theorem}
\newtheorem{Thm}{Theorem}
\newtheorem{Cor}[Thm]{Corollary}
\newtheorem{Prop}[Thm]{Proposition}
\newtheorem{Lemma}[Thm]{Lemma}
\theoremstyle{definition}
\newtheorem{Defn}{Definition}
\newtheorem{Remark}{Remark}
\newtheorem{Example}{Example}
\newcommand{\mf}[1]{\mathbb{#1}}
\newcommand{\mc}[1]{\mathcal{#1}}
\newcommand{\mb}[1]{\mathbf{#1}}
\newcommand{\norm}[1]{\left\Vert#1\right\Vert}
\newcommand{\abs}[1]{\left\vert#1\right\vert}
\newcommand{\set}[1]{\left\{#1\right\}}
\newcommand{\ip}[2]{\left \langle #1, #2 \right \rangle}
\newcommand{\state}[1]{\varphi \left[ #1 \right]}
\renewcommand{\phi}{\varphi}
\newcommand{\Span}[1]{\mathrm{Span} \left( #1 \right)}
\newcommand{\stateo}[1]{\varphi_\Omega \left[ #1 \right]}
\newcommand{\br}{\medskip\noindent}
\title{Product-type non-commutative polynomial states}
\author[M.~Anshelevich]{Michael Anshelevich}
\address{Department of Mathematics, Texas A\&M University, College Station, TX 77843-3368}
\email{manshel@math.tamu.edu}
\subjclass[2000]{Primary 46L53; Secondary 46L54, 05E35}
\date{\today}
\begin{document}

\begin{abstract}
In \cite{AnsMonic,AnsBoolean}, we investigated monic multivariate non-commutative orthogonal polynomials, their recursions, states of orthogonality, and corresponding continued fraction expansions. In this note, we collect a number of examples, demonstrating what these general results look like for the most important states on non-commutative polynomials, namely for various product states. In particular, we introduce a notion of a product-type state on polynomials, which covers all the non-commutative universal products and excludes some other familiar non-commutative products, and which guarantees a number of nice properties for the corresponding polynomials.
\end{abstract}

\maketitle

\section{Introduction}

\noindent
The purpose of this note is to describe examples illustrating theorems from \cite{AnsMonic} and \cite{AnsBoolean}. These examples will all be ``product-type'' states on non-commutative polynomials. We first recall the usual notion of a product state.

\br
Let $\mu_1, \mu_2$ be two probability measures on $\mf{R}$ all of whose moments are finite; we identify them with states ($=$ positive linear functionals taking the identity to $1$) on polynomials $\mf{R}[x]$ via
\[
\mu_i[P(x)] = \int_{\mf{R}} P(x) \,d\mu_i(x).
\]
There are many measures on $\mf{R} \times \mf{R}$ with marginals $\mu_1, \mu_2$. Among these, the canonical choice is the product measure $\mu_1 \otimes \mu_2$, corresponding to the state on $\mf{R}[x_1,x_2] = \mf{R}[x_1] \otimes \mf{R}[x_2]$ characterized by the factorization property
\[
(\mu_1 \otimes \mu_2)[P(x_1) Q(x_2)] = \mu_1[P(x_1)] \mu_2[Q(x_2)].
\]
For future reference, we note another factorization property that characterizes the product measure. Namely, let $\set{P^{(i)}_n(x)}$ be the monic orthogonal polynomials for $\mu_i$. Then the monic two-variable polynomials
\begin{equation}
\label{Polynomial-factor-commutative}
P_{n,k}(x_1,x_2) = P^{(1)}_n(x_1) P^{(2)}_k(x_2)
\end{equation}
are precisely the monic orthogonal polynomials for $\mu_1 \otimes \mu_2$.

\br
In this note, we are interested in non-commutative products. That is, given states $\mu_1, \mu_2$ as above, we are interested in canonical ``product-type'' states $\mu_1 \cdot \mu_2$ on the algebra of non-commutative polynomials $\mf{R} \langle x_1, x_2 \rangle = \mf{R}[x_1] \ast \mf{R}[x_2]$ whose restrictions to $\mf{R}[x_1]$, $\mf{R}[x_2]$ are $\mu_1$, $\mu_2$, respectively. One approach is to define canonical products on general, not necessarily polynomial, algebras. This approach was taken by Speicher \cite{SpeUniv} and Ben Ghorbal and Sch{\"u}rmann \cite{Ben-Ghorbal-Independence} and extended by Muraki \cite{Muraki-Quasi-universal,Muraki-Natural-products}. In addition to the usual (tensor) product, they obtained four non-commutative products: the free product \cite{Avitzour,Voi85,VDN}, the Boolean product \cite{Bozejko-Riesz-product,SW97} and the monotone and anti-monotone products \cite{Muraki-Brownian-motion,Franz-Monotone-associative}. In Speicher's approach, these are precisely the only constructions which are associative and universal, in the sense that there are universal polynomials expressing joint moments of elements of the product algebra in terms of individual moments of these elements.

\br
Restricting to polynomial algebras changes the context significantly. One can no longer ask for associativity in a straightforward way, since having a method for defining a product state on $\mf{R}[x_1] \ast \mf{R}[x_2]$ does not tell us how to define a product state on $(\mf{R}[x_1] \ast \mf{R}[x_2]) \ast \mf{R}[x_3]$. Universal formulas also no longer make sense, since for example the property
\[
(\mu_1 \cdot \mu_2) [x_1 x_2 x_1] = \mu_1[x_1^2] \mu_2[x_2]
\]
need not guarantee that
\[
(\mu_1 \cdot \mu_2) [x_1 x_2^2 x_1] = \mu_1[x_1^2] \mu_2[x_2^2].
\]
On the other hand, the canonical grading and basis for polynomial algebras make some constructions nicer; for example, while the Boolean and monotone products are in general only defined for non-unital algebras, there is no difficulty in defining them on (unital) polynomial algebras. Nevertheless, there are too many product-type constructions, for example the $q$-deformed products of \cite{NicaQR} and \cite{AnsQCum}, which, while not being universal \cite{LeeMaaObstruction} are well-defined on polynomials.

\br
As a replacement for the universality restriction, we propose to require the factorization property of orthogonal polynomials analogous to equation~\eqref{Polynomial-factor-commutative}. We will see that all the non-commutative universal products have this property. On the other hand, we will also see in Example~\ref{Example:Counter} that the $q$-deformed products do not. Indeed, none of our products are obtained as deformations. Instead, they are constructed by partial degenerations of the free product.

\section{Generalities on product-type states}

\subsection{Polynomials}
Throughout the paper we consider products of two states $\mu_1$, $\mu_2$ on $\mf{R}[x]$, which for simplicity we take to be faithful. Their orthogonal polynomials $\set{P^{(i)}_n(x_i): i = 1, 2}$ satisfy recursion relations
\begin{equation}
\label{Recursion}
x_i P^{(i)}_n(x_i) = P^{(i)}_{n+1}(x_i) + \beta^{(i)}_n P^{(i)}_n(x_i) + \gamma^{(i)}_n P^{(i)}_{n-1}(x_i).
\end{equation}

\subsection{The free semigroup}
We can identify the elements of the free (non-commutative) semigroup on two generators $\mf{FS}(1,2)$ with multi-indices $\vec{u} = (u(1), u(2), \ldots, u(n))$ or words in the letters $\set{1,2}$, monomials in $\set{x_1, x_2}$, and vertices of the infinite binary tree. The semigroup operation will be denoted by concatenation.

\br
A subset $\Omega \subset \mf{FS}(1,2)$ is \emph{hereditary} if for any $\vec{u} \in \Omega$, every postfix of $\vec{u}$ is also in $\Omega$ (our words are written from the right and incremented on the left), that is, for
\[
\vec{u} = (u(1), u(2), \ldots, u(n)) \in \Omega,
\]
each $(u(k), u(k+1), \ldots, u(n)) \in \Omega$. In the binary tree, a hereditary subset is simply a subtree containing the root.

\subsection{Product-type states}
Denote $\mb{1}^n = (1, 1, \ldots, 1)$ and $\mb{2}^n = (2, 2, \ldots, 2)$ the constant words of length $n$.

\begin{Defn}
\label{Defn:Polynomials}
Let $\Omega \subset \mf{FS}(1,2)$ be a hereditary subset which also has the following two properties.
\begin{enumerate}
\item
$\set{\mb{1}^n, \mb{2}^n: n \geq 1} \subset \Omega$.
\item
If $\vec{u} \in \Omega$, $u(1) = i$, $(j, \vec{u}) \in \Omega$, $j \neq i$, then also $(i, \vec{u}) \in \Omega$.
\end{enumerate}
In the binary tree, the second condition corresponds to the tree containing only vertices of the four (out of the possible six) types in Figure~\ref{Figure:Vertices}.

\begin{figure}[hhh]
\psfig{figure=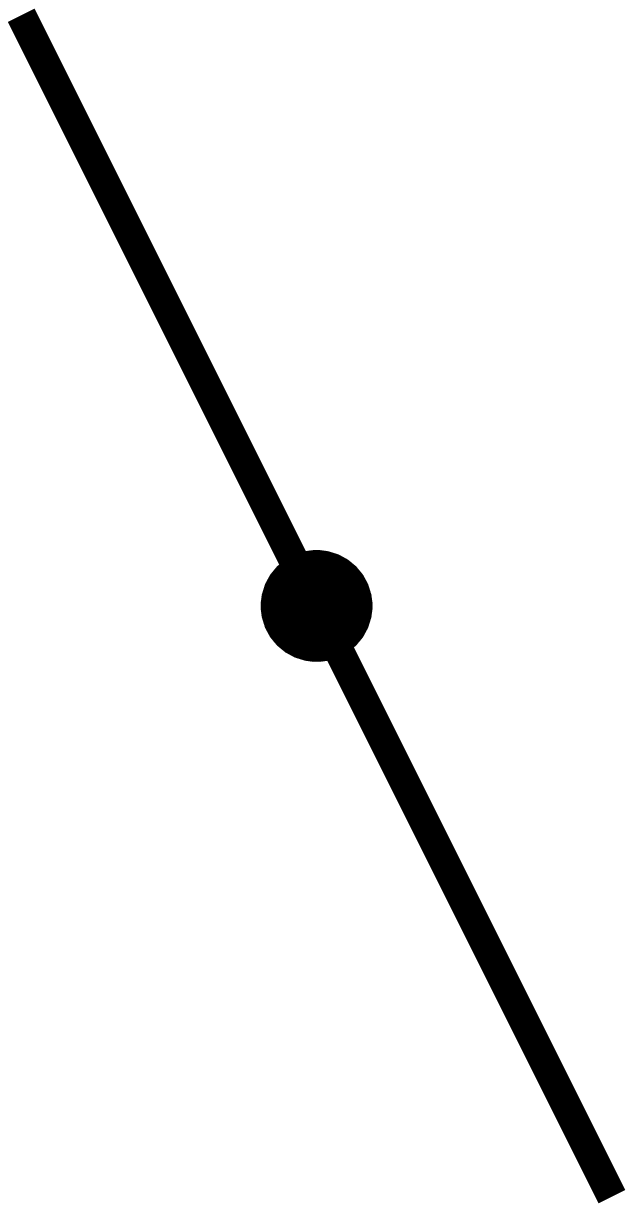,height=0.1\textwidth,width=0.05\textwidth}
\hspace{0.05\textwidth}
\psfig{figure=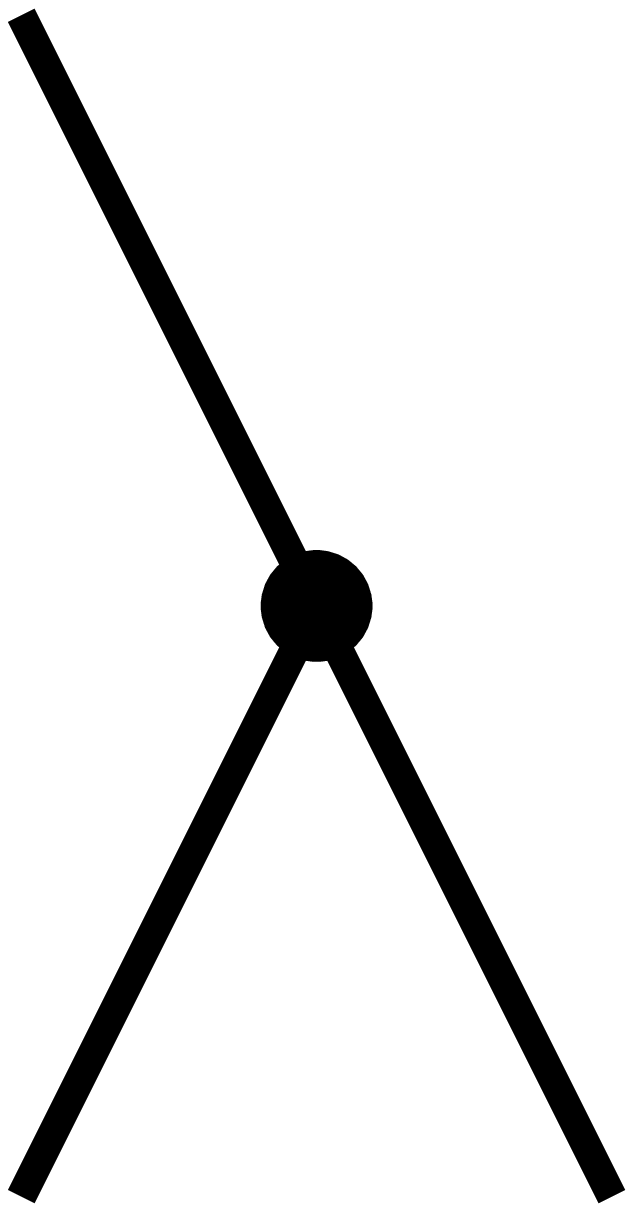,height=0.1\textwidth,width=0.05\textwidth}
\hspace{0.05\textwidth}
\psfig{figure=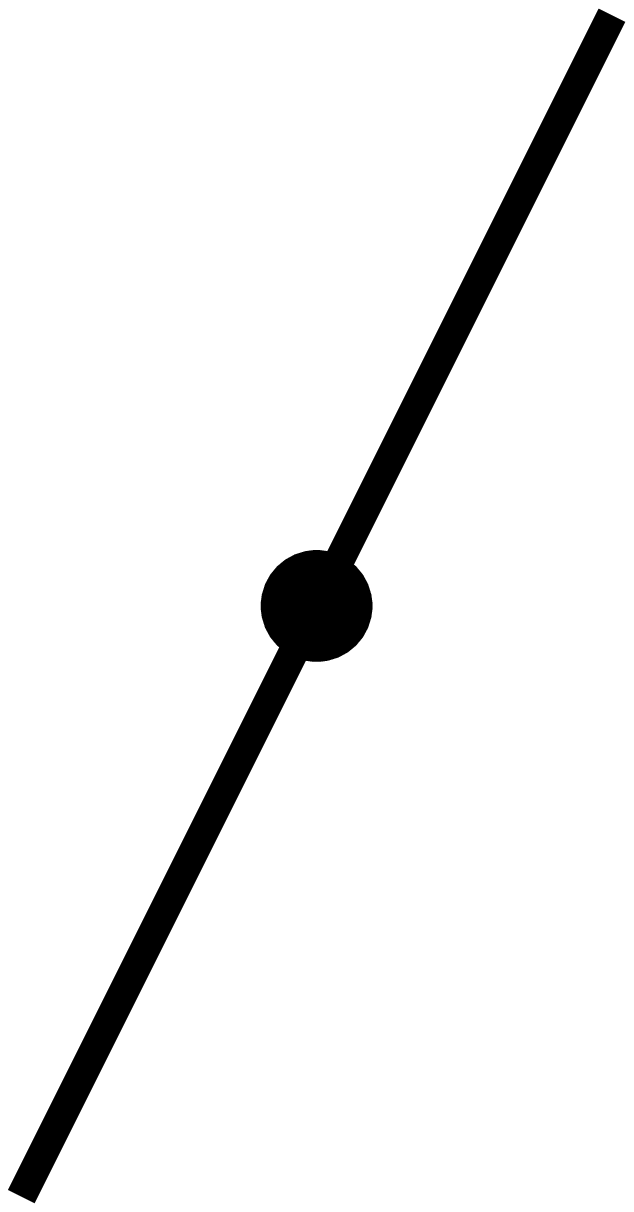,height=0.1\textwidth,width=0.05\textwidth}
\hspace{0.05\textwidth}
\psfig{figure=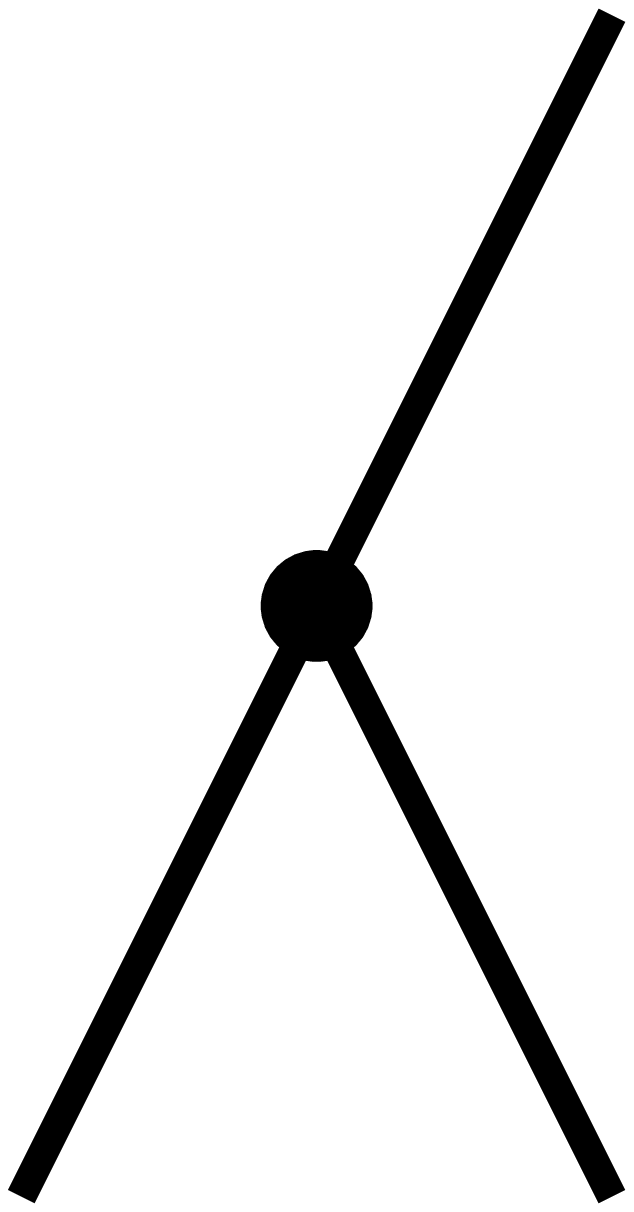,height=0.1\textwidth,width=0.05\textwidth}
\caption{\label{Figure:Vertices}
Vertices appearing in a subtree in Definition~\ref{Defn:Polynomials}.}
\end{figure}

\br
For $\Omega \subset \mf{FS}(1,2)$, denote
\[
\partial \Omega = \set{\vec{u} \in \Omega : u(1) = i, (i, \vec{u}) \not \in \Omega}.
\]
In general, $\partial \Omega$ contains all the leaves of $\Omega$ but may contain other elements as well; for $\Omega$ as above, $\partial \Omega$ consists exactly of its leaves. We will also see that for all the universal products, $\partial \Omega = \emptyset$. Finally, note that if $\Omega$ is hereditary and satisfies the second condition above, then $\Omega \backslash \partial \Omega$ is hereditary as well.

\br
For each $\vec{u} \in \Omega$,
\[
\vec{u} = \mb{1}^{i(1)} \mb{2}^{j(1)} \ldots \mb{1}^{i(n)} \mb{2}^{j(n)},
\]
where $i(1), j(n) \geq 0$ and the rest of $i(k), j(k) \geq 1$, denote
\[
P_{\vec{u}}(x_1, x_2) = \prod_{k=1}^n P^{(1)}_{i(k)}(x_1) P^{(2)}_{j(k)}(x_2).
\]
For $\vec{u} \not \in \Omega$, write $\vec{u} = (\vec{v}, \vec{w})$, where $\vec{w}$ is the longest postfix of $\vec{u}$ in $\Omega$. In this case, denote
\[
P_{\vec{u}}(x_1, x_2) = x_{\vec{v}} P_{\vec{w}}(x_1, x_2),
\]
where
\[
x_{(v(1), v(2), \ldots, v(n))} = x_{v(1)} x_{v(2)} \ldots x_{v(n)}.
\]
\end{Defn}

\begin{Defn}
For $\Omega$ as in Definition~\ref{Defn:Polynomials}, define the linear functional $\phi_\Omega$ on $\mf{R} \langle x_1, x_2 \rangle$ by requiring that
\[
\stateo{1} = 1, \qquad \stateo{P_{\vec{u}}} = 0 \text{ for } \abs{\vec{u}} \geq 1,
\]
so that these polynomials are \emph{centered} with respect to $\phi_\Omega$. We call any functional obtained in this way a \emph{product-type state}.
\end{Defn}

\begin{Prop}
\label{Prop:Orthogonal}
Let $\Omega \subset \mf{FS}(1,2)$ be as in Definition~\ref{Defn:Polynomials}, and $\mu_1, \mu_2$ be faithful.
\begin{enumerate}
\item
Polynomials $\set{P_{\vec{u}} : \vec{u} \in \mf{FS}(1,2)}$ are orthogonal with respect to $\phi_\Omega$. In particular, $\phi_\Omega$ is a positive linear functional.
\item
$\norm{P_{\vec{u}}}_{\phi_\Omega} = 0$ if and only if $\vec{u} \not \in (\Omega \backslash \partial \Omega)$.
\end{enumerate}
\end{Prop}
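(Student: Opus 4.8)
\noindent
The engine of the proof is a three-term recursion for multiplication by $x_i$ in the basis $\set{P_{\vec{u}}}$, run against a length grading. First I would note that $\set{P_{\vec{u}} : \vec{u} \in \mf{FS}(1,2)}$ is a linear basis of $\mf{R}\langle x_1,x_2\rangle$: each $P_{\vec{u}}$ has leading monomial $x_{\vec{u}}$ (as a product of monic one-variable polynomials when $\vec{u} \in \Omega$, and because $P_{\vec{u}} = x_{\vec{v}} P_{\vec{w}}$ with $P_{\vec{w}}$ already of leading monomial $x_{\vec{w}}$ when $\vec{u} \notin \Omega$), so the transition from $\set{x_{\vec{u}}}$ is unitriangular for the grading by word length. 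Hence $\phi_\Omega$ is a well-defined functional, the bilinear form $\ip{P}{Q} = \stateo{P^\ast Q}$ makes sense, and each $x_i$ is self-adjoint for it since $\ip{x_i P}{Q} = \stateo{P^\ast x_i Q} = \ip{P}{x_i Q}$. Writing $V_n = \Span{P_{\vec{u}} : \abs{\vec{u}} = n}$ and $\vec{u}^-$ for $\vec{u}$ with its first letter deleted, the recursion~\eqref{Recursion} applied inside the leading block and reassembled via Definition~\ref{Defn:Polynomials} gives, in all cases,
\[
x_i P_{\vec{u}} = P_{(i,\vec{u})} + b_i(\vec{u})\, P_{\vec{u}} + c_i(\vec{u})\, P_{\vec{u}^-},
\]
with three regimes: if $\vec{u}$ begins with $i$ and $\vec{u} \notin \partial\Omega$ (extending the leading block) then $b_i = \beta^{(i)}_m$, $c_i = \gamma^{(i)}_m$ with $m$ the block length; if $\vec{u}$ begins with $j \neq i$ and $(i,\vec{u}) \in \Omega$ (starting a new block) then $c_i = 0$, $b_i = \beta^{(i)}_0$; and if $(i,\vec{u}) \notin \Omega$, i.e. $\vec{u} \in \partial\Omega$ or $\vec{u} \notin \Omega$, both corrections vanish because there $P_{(i,\vec{u})}$ is by definition $x_i P_{\vec{u}}$. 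The decisive features are that the top term lands in $V_{n+1}$ while the corrections stay in $V_n \oplus V_{n-1}$, and that a $\gamma$-correction occurs \emph{only} when a block is being extended.

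\br
Orthogonality (part~(a)) then splits in two. I would first prove the one-sided cross-length statement $\ip{P_{\vec{a}}}{P_{\vec{b}}} = 0$ whenever $\abs{\vec{a}} < \abs{\vec{b}}$, by induction on $\abs{\vec{a}}$; the base case $\abs{\vec{a}} = 0$ is precisely the centering $\ip{1}{P_{\vec{b}}} = \stateo{P_{\vec{b}}} = 0$, and in the step one peels a letter off the shorter word $\vec{a}$, moves the self-adjoint $x_i$ onto $\vec{b}$, and invokes the recursion; the strict inequality $\abs{\vec{a}} < \abs{\vec{b}}$ guarantees that every resulting pairing again has its left word strictly shorter than its right word and of smaller left length, so the inductive hypothesis applies. (Only this one-sided version is ever needed below, so that no symmetry of $\phi_\Omega$ is assumed --- indeed $\phi_\Omega$ need not be symmetric, e.g. for the monotone product.) Granting this, I would treat $\vec{u} \neq \vec{w}$ with $\abs{\vec{u}} = \abs{\vec{w}} = n$ by induction on $n$: peeling $x_{u(1)}$ off the left word, the corrections and all unequal-length pairings vanish by the cross-length statement; if $u(1) \neq w(1)$ the letter starts a new block on the $\vec{w}$ side, so no same-length term survives and the pairing is $0$, while if $u(1) = w(1)$ the same computation collapses $\ip{P_{\vec{u}}}{P_{\vec{w}}}$ to a multiple of $\ip{P_{\vec{u}^-}}{P_{\vec{w}^-}}$ with $\vec{u}^- \neq \vec{w}^-$ of length $n-1$, which vanishes by induction.

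\br
For the norms (part~(b)) the identical computation with $\vec{w} = \vec{u}$ yields $\norm{P_{\vec{u}}}_{\phi_\Omega}^2 = \gamma^{(i)}_m \norm{P_{\vec{u}^-}}_{\phi_\Omega}^2$ when $\vec{u} \in \Omega \backslash \partial\Omega$, and $\norm{P_{\vec{u}}}_{\phi_\Omega}^2 = 0$ when $\vec{u} \in \partial\Omega$ or $\vec{u} \notin \Omega$ --- the latter because there $x_i P_{\vec{u}} = P_{(i,\vec{u})} \in V_{n+1}$ is orthogonal to $P_{\vec{u}^-} \in V_{n-1}$. To unwind the first recursion I must check that $\vec{u} \in \Omega \backslash \partial\Omega$ forces $\vec{u}^- \in \Omega \backslash \partial\Omega$: heredity gives $\vec{u}^- \in \Omega$, and $\vec{u}^- \notin \partial\Omega$ is automatic while one stays inside the leading block, whereas at a block boundary it is exactly the content of the second condition of Definition~\ref{Defn:Polynomials} (prepending the new leading letter $j$ is allowed because prepending the old letter $i \neq j$ was). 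Iterating down to the empty word then expresses $\norm{P_{\vec{u}}}_{\phi_\Omega}^2$ as a product of factors $\gamma^{(i)}_k$, positive since faithfulness of $\mu_i$ makes every $\gamma^{(i)}_k$ positive; together with the two vanishing cases this is exactly the assertion of~(b). Finally, since the $P_{\vec{u}}$ are orthogonal with $\norm{P_{\vec{u}}}_{\phi_\Omega}^2 \geq 0$, expanding any $Q = \sum_{\vec{u}} c_{\vec{u}} P_{\vec{u}}$ gives $\stateo{Q^\ast Q} = \sum_{\vec{u}} c_{\vec{u}}^2 \norm{P_{\vec{u}}}_{\phi_\Omega}^2 \geq 0$, proving the positivity claimed in~(a).

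\br
I expect the main obstacle to be the careful bookkeeping of the recursion: correctly separating the three regimes (extend a block, start a block, leave $\Omega$) and, in particular, pinning down that a $\gamma$-correction arises \emph{only} when the leading block is extended, since this is exactly what makes the equal-length induction close. The one genuinely structural (rather than computational) input is the use of the second condition of Definition~\ref{Defn:Polynomials} in part~(b): it is what prevents the norm recursion for an interior word from prematurely reaching a boundary word and thereby collapsing to zero, so it is here --- and not in mere heredity --- that the precise shape of the admissible subtrees is needed.
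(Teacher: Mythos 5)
Your proof is correct, but it is not the route the paper takes. The paper derives exactly the same three-term recursion $x_i P_{\mb{i}^k\vec{v}} = P_{(i,\vec{u})} + \beta^{(i)}_k P_{\vec{u}} + \gamma^{(i)}_k P_{\mb{i}^{k-1}\vec{v}}$ (with $x_iP_{\vec{u}} = P_{(i,\vec{u})}$ when $(i,\vec{u})\notin\Omega$), but then, instead of running your two-stage induction, it reads off the diagonal matrices $\mc{T}^{(i)}$ and $\mc{C}$ from these coefficients and invokes Theorem~2 of \cite{AnsMonic}: that theorem supplies a positive state $\phi_{\set{\mc{T}_i},\mc{C}}$ for which the $P_{\vec{u}}$ are orthogonal, centeredness then forces $\phi_\Omega = \phi_{\set{\mc{T}_i},\mc{C}}$, and part~(b) falls out of the general norm formula $\norm{P_{\vec{u}}}^2 = \prod_{j}\mc{C}_{(u(j),\ldots,u(n))}$. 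What you have written is precisely the ``direct proof left to the reader'' that the paper declines to give. The trade-off: the paper's route is shorter and, crucially, establishes the identification $\phi_\Omega = \phi_{\set{\mc{T}_i},\mc{C}}$ with diagonal $\mc{T}^{(i)}$, which is reused later for the continued fraction expansion; your route is self-contained, avoids the MOPS classification machinery entirely, and makes visible exactly where each hypothesis enters --- in particular your observation that the second condition of Definition~\ref{Defn:Polynomials} is what keeps the norm recursion from hitting $\partial\Omega$ prematurely is the right structural point. One small imprecision: your gloss ``$(i,\vec{u})\notin\Omega$, i.e.\ $\vec{u}\in\partial\Omega$ or $\vec{u}\notin\Omega$'' is not an equivalence, since $(i,\vec{u})\notin\Omega$ can occur with $\vec{u}\in\Omega\setminus\partial\Omega$ when $i\neq u(1)$ (e.g.\ the Boolean $\Omega$); the equivalence holds only for $i=u(1)$, which is the case you actually use in the norm computation, so nothing downstream breaks --- the regime condition itself, and the conclusion that both corrections vanish there, are correct as stated.
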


\br
A direct proof is left to the reader; instead, we will obtain this result below as a corollary of a general theorem.

\begin{Remark}
If $\mu_1, \mu_2$ are not faithful, the proposition still holds with the following modification. If, say, $\mu_1$ is supported on $n$ points, then we require that in $\Omega$, no more than $n$ consecutive $1$'s appear.
\end{Remark}

\begin{Prop}
Any product-type state $\phi_\Omega$ has the property of \emph{stochastic independence}, that is, for any $n, k$,
\[
\stateo{x_1^n x_2^k} = \mu_1[x_1^n] \mu_2[x_2^k].
\]
\end{Prop}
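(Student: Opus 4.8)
The plan is to compare $\varphi_\Omega$ against the honest tensor-product functional $\mu_1 \otimes \mu_2$ on $\mf{R}[x_1,x_2]$, and to prove that the two agree on every monomial of the form $x_1^n x_2^k$. Since the product measure satisfies $(\mu_1 \otimes \mu_2)[x_1^n x_2^k] = \mu_1[x_1^n]\,\mu_2[x_2^k]$, establishing such agreement is exactly the claim. The comparison will be carried out through the polynomials $P_{\mb{1}^r \mb{2}^l}$ attached to the single-block words $\mb{1}^r \mb{2}^l$, $r,l \ge 0$.

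First I would observe that each $P_{\mb{1}^r \mb{2}^l}$ has leading monomial $x_1^r x_2^l$ with coefficient $1$ and involves only monomials $x_1^{r'} x_2^{l'}$ with $r' \le r$, $l' \le l$; this holds whether or not $\mb{1}^r \mb{2}^l \in \Omega$, using in the latter case that $P_{\mb{1}^r\mb{2}^l}$ has the form $x_1^{\,r-r^{\ast}} P^{(1)}_{r^{\ast}}(x_1) P^{(2)}_l(x_2)$ for a suitable $r^{\ast} < r$. Consequently the family $\set{P_{\mb{1}^r \mb{2}^l} : r \le n,\ l \le k}$ is unitriangular with respect to the divisibility partial order on $\set{x_1^{r} x_2^{l} : r \le n,\ l \le k}$, hence is a basis for the span of those monomials. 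In particular $x_1^n x_2^k$ is a linear combination of the $P_{\mb{1}^r \mb{2}^l}$ with $r \le n$, $l \le k$, so it suffices to check that $\varphi_\Omega$ and $\mu_1 \otimes \mu_2$ take the same value $\delta_{r,0}\delta_{l,0}$ on each of these polynomials.

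For $\varphi_\Omega$ this is immediate, since $P_{\mb{1}^r \mb{2}^l}$ equals $1$ when $(r,l)=(0,0)$ and is one of the centered polynomials $P_{\vec u}$ otherwise. For $\mu_1 \otimes \mu_2$ I would distinguish two cases. When $\mb{1}^r \mb{2}^l \in \Omega$ the polynomial factorizes as $P^{(1)}_r(x_1) P^{(2)}_l(x_2)$, so $\mu_1 \otimes \mu_2$ sends it to $\mu_1[P^{(1)}_r]\,\mu_2[P^{(2)}_l] = \delta_{r,0}\delta_{l,0}$. The delicate case --- which I expect to be the only real obstacle --- is $\mb{1}^r \mb{2}^l \notin \Omega$, where $P_{\mb{1}^r\mb{2}^l}$ is no longer the factorized polynomial. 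Here I would use that every pure word $\mb{2}^l$ lies in $\Omega$, so that by heredity the set of $r'$ for which $\mb{1}^{r'}\mb{2}^l \in \Omega$ is downward closed; the longest postfix of $\mb{1}^r\mb{2}^l$ inside $\Omega$ is then $\mb{1}^{r^{\ast}}\mb{2}^l$ for the largest admissible $r^{\ast}$, necessarily $< r$, and the defining formula for $P_{\vec u}$ with $\vec u \notin \Omega$ gives $P_{\mb{1}^r\mb{2}^l} = x_1^{\,r-r^{\ast}} P^{(1)}_{r^{\ast}}(x_1) P^{(2)}_l(x_2)$. Because no pure word is excluded from $\Omega$, the case $\mb{1}^r\mb{2}^l \notin \Omega$ forces $l \ge 1$, and the surviving factor $P^{(2)}_l(x_2)$ is annihilated by $\mu_2$; thus $\mu_1 \otimes \mu_2$ also sends this polynomial to $0$. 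With both functionals agreeing on every $P_{\mb{1}^r\mb{2}^l}$, they agree on $x_1^n x_2^k$, and the proposition follows.
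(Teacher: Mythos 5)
Your argument is correct and is essentially the paper's proof in different packaging: both hinge on expanding $x_1^n x_2^k$ in the centered polynomials $P_{\mb{1}^r \mb{2}^l}$ --- those with $\mb{1}^r \mb{2}^l \notin \Omega$ being $x_1^{r-r^*} P^{(1)}_{r^*}(x_1) P^{(2)}_l(x_2)$, exactly the threshold decomposition the paper uses with $j = r^*$ --- and observing that only the constant term survives. The paper extracts the surviving coefficient directly as $\mu_2[x_2^k]\, \stateo{x_1^n}$ via a two-stage expansion (first $x_2^k$ in the $P^{(2)}_i$, then $x_1^n$ relative to the threshold $j$), whereas you identify it by comparing $\phi_\Omega$ with $\mu_1 \otimes \mu_2$ on the unitriangular basis; the two bookkeeping devices are interchangeable.
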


\begin{proof}
First note that
\[
\stateo{x_1^n x_2^k} = \sum_{i=0}^k a_i \stateo{x_1^n P^{(2)}_i(x_2)}
\]
for some $a_i$, with $a_0 = \mu_2[x_2^k]$. Fix $i>0$, and choose $j$ so that $\mb{1}^j \mb{2}^i \in \Omega$, $\mb{1}^{j+1} \mb{2}^i \not \in \Omega$ ($j$ may be zero or infinity). Then for some $b_s$,
\[
\stateo{x_1^n P^{(2)}_i(x_2)} = \sum_{s=0}^j b_s \stateo{P^{(1)}_s(x_1) P^{(2)}_i(x_2)} + \sum_{s = j+1}^n b_s \stateo{x_1^{s-j} P^{(1)}_j(x_1) P^{(2)}_i(x_2)},
\]
which is equal to zero. It follows that
\[
\stateo{x_1^n x_2^k} = a_0 \stateo{x_1^n} = \mu_1[x_1^n] \mu_2[x_2^k]. \qedhere
\]
\end{proof}

\subsection{MOPS}
A product of single-variable monic polynomials is a multivariate monic polynomials. We will see in the proof of Proposition~\ref{Prop:Orthogonal} that in fact, all of our product states have monic orthogonal polynomials (MOPS). Not every state has that property; those that do are characterized in Theorem~2 of \cite{AnsMonic}. Conversely, the following proposition points out general properties of MOPS which served as the starting point for our Definition~\ref{Defn:Polynomials}. Of course, not every state with MOPS is a product-type state.

\begin{Lemma}
Let $\phi$ be a state on $\mf{R} \langle x_1, x_2 \rangle$ with MOPS $\set{Q_{\vec{u}}}$.
\begin{enumerate}
\item
The set $\set{\vec{u} : \norm{Q_{\vec{u}}} \neq 0}$ is a hereditary subset of $\mf{FS}(1,2)$.
\item
$\set{P : \norm{P} = 0} = \Span{\set{Q_{\vec{u}} : \norm{Q_{\vec{u}}} = 0}}$.
\end{enumerate}
\end{Lemma}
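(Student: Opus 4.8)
The plan is to work throughout with the semi-inner product $\ip{P}{Q} = \state{P^* Q}$, where $*$ fixes $x_1, x_2$ and reverses products, so that $\norm{P}^2 = \state{P^* P} \geq 0$ by positivity of $\phi$. The two facts about positive semidefinite forms I will lean on are: (i) Cauchy--Schwarz, so that $\norm{Q} = 0$ forces $\ip{Q}{P} = 0$ for every $P$, i.e.\ zero-norm elements lie in the null space; and (ii) the null space $\set{P : \norm{P} = 0}$ is a linear subspace. I also record two structural facts about the MOPS. Since each $Q_{\vec u} = x_{\vec u} + (\text{lower degree})$, the family $\set{Q_{\vec u}}$ is triangular with respect to the monomial basis and hence is itself a vector-space basis of $\mf{R}\langle x_1, x_2\rangle$, regardless of the norms; and, by the same triangularity, $\Span{\set{Q_{\vec v} : \abs{\vec v} \leq d}}$ is exactly the space of polynomials of degree $\leq d$, so $Q_{\vec u}$ is orthogonal to every polynomial of degree $< \abs{\vec u}$.

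For part (a), since every postfix of $\vec u$ is reached by deleting leftmost letters one at a time, heredity reduces by induction to a single step: if $\vec w$ is any word, $i \in \set{1,2}$, and $\norm{Q_{\vec w}} = 0$, then $\norm{Q_{(i,\vec w)}} = 0$. To prove this I would start from $x_i Q_{\vec w}$, whose top-degree part is the monic monomial $x_i x_{\vec w} = x_{(i,\vec w)}$; hence $x_i Q_{\vec w} - Q_{(i,\vec w)}$ has degree $\leq \abs{\vec w}$ and so is orthogonal to $Q_{(i,\vec w)}$ by the structural fact above. Therefore
\[
\norm{Q_{(i,\vec w)}}^2 = \ip{Q_{(i,\vec w)}}{x_i Q_{\vec w}} = \ip{x_i Q_{(i,\vec w)}}{Q_{\vec w}},
\]
the last equality using $x_i^* = x_i$. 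Since $\norm{Q_{\vec w}} = 0$ puts $Q_{\vec w}$ in the null space, the right-hand side is $0$, giving $\norm{Q_{(i,\vec w)}} = 0$, as wanted.

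For part (b), the inclusion $\Span{\set{Q_{\vec u} : \norm{Q_{\vec u}} = 0}} \subseteq \set{P : \norm{P} = 0}$ is immediate, since the null space is a subspace containing each such $Q_{\vec u}$. For the reverse, I would take $P$ with $\norm{P} = 0$ and expand it in the MOPS basis, $P = \sum_{\vec u} a_{\vec u} Q_{\vec u}$. For any $\vec v$ with $\norm{Q_{\vec v}} \neq 0$, orthogonality gives $\ip{P}{Q_{\vec v}} = a_{\vec v} \norm{Q_{\vec v}}^2$, while $\ip{P}{Q_{\vec v}} = 0$ because $P$ lies in the null space; hence $a_{\vec v} = 0$. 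Thus only the zero-norm $Q_{\vec u}$ survive in the expansion, which is the claim.

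I expect the single-step claim in part (a) to be the one point needing care: the whole argument hinges on pushing the multiplication operator $x_i$ across the form via $x_i^* = x_i$ and then invoking Cauchy--Schwarz to annihilate the surviving term, so I would take care that the degree bookkeeping (that $x_i Q_{\vec w} - Q_{(i,\vec w)}$ really is of lower degree) and the self-adjointness are stated cleanly. Everything in part (b), and the reduction of (a) to the single step, is then routine linear algebra for a positive semidefinite form.
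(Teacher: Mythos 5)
Your proof is correct, and it supplies an actual argument where the paper gives none: the paper simply cites Lemmas~2 and~3 of \cite{AnsMonic} for parts (a) and (b) respectively. Your route --- reducing heredity to the single step via the identity $\norm{Q_{(i,\vec w)}}^2 = \ip{x_i Q_{(i,\vec w)}}{Q_{\vec w}}$ (using monicity, the degree bookkeeping, and $x_i^* = x_i$) and then killing the right side by Cauchy--Schwarz, plus the basis-expansion argument for (b) --- is the standard one and is essentially a reconstruction of those cited lemmas, so there is nothing to flag.
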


\begin{proof}
Part (a) follows from Lemma~2 of \cite{AnsMonic}, and part (b) from Lemma~3 of the same paper.
\end{proof}

\begin{Example}
\label{Example:Counter}
In the next section we describe how all non-commutative universal product fit into our scheme. Here we list two examples which do not.

\br
The tensor product $\phi$ of $\mu_1, \mu_2$ is defined by
\[
\state{x_1^{u(1)} x_2^{v(1)} \ldots x_1^{u(n)} x_2^{v(n)}} = \mu_1 \left[x_1^{u(1) + \ldots + u(n)} \right] \mu_2 \left[x_2^{v(1) + \ldots + v(n)} \right],
\]
and the corresponding orthogonal polynomials are all of the form in equation~\eqref{Polynomial-factor-commutative}. These, however, are not monic orthogonal polynomials in the non-commutative sense. For example, $P_{12}$ and $P_{21}$ are not orthogonal, but rather the same. It is also easy to see that for the tensor product, part (b) of the preceding lemma fails. More generally, commutativity is incompatible with the MOPS condition, and so the tensor product does not fit into our framework.

\br
There are several different notions of the $q$-deformed product. However, many of them coincide in the canonical case of the $q$-product of $q$-Gaussian distributions. In this case $\mu_1 = \mu_2$ are determined by
\[
x P_n(x) = P_{n+1}(x) + \frac{1 - q^{n+1}}{1-q} P_{n-1}(x);
\]
see \cite{NicaQR} or \cite{AnsQCum} for the description of their $q$-deformed product. In particular, some of the monic polynomials obtained by orthogonalization of the monomials are $P_i = x_i$, $P_{12} = x_1 x_2$, $P_{21} = x_2 x_1$, and
\[
P_{121}(x_1, x_2) = x_1 x_2 x_1 - q x_2;
\]
see~\cite{Effros-Popa} for general formulas. First we note that $\ip{P_{12}}{P_{21}} = q \neq 0$ unless $q=0$, so these are not MOPS. Second, we see that $P_{121}$ does not factor for $q \neq 0, \pm 1$.
\end{Example}

\br
By Theorem~2 of \cite{AnsMonic}, every state $\phi$ on $\mf{R} \langle x_1, x_2, \ldots, x_d \rangle$ with MOPS has a representation of a special type on a graded Hilbert space, and such states are parameterized by collections of matrices
\[
\mc{C}^{(k)} = \text{ diagonal non-negative $d^k \times d^k$ matrix, } k = 1, 2, \ldots
\]
and
\[
\mc{T}_i^{(k)} = d^k \times d^k  \text{ matrix, } k = 0, 1, \ldots, i = 1, 2, \ldots, d,
\]
satisfying a commutation relation. For the corresponding state $\phi_{\set{\mc{T}_i}, \mc{C}}$, the entries of these matrices are precisely the coefficients in the recursion relations for the MOPS of $\phi$.

\begin{proof}[Proof of Proposition~\ref{Prop:Orthogonal}]
It is easy to see that if $(i, \vec{u}) \in \Omega$, $\vec{u} = \mb{i}^k \vec{v}$ with $v(1) \neq i$, $k \geq 0$, then
\begin{equation*}
x_i P_{\mb{i}^k \vec{v}} = P_{(i, \vec{u})} + \beta^{(i)}_k P_{\vec{u}} + \gamma^{(i)}_k P_{\mb{i}^{k-1} \vec{v}},
\end{equation*}
where $\gamma_0 \equiv 0$. Also, if $(i, \vec{u}) \not \in \Omega$, then
\begin{equation*}
x_i P_{\vec{u}} = P_{(i, \vec{u})}.
\end{equation*}
So denote, for $\mb{i}^{k+1} \vec{v} \in \Omega$, $v(1) \neq i$, $k \geq 0$,
\[
\mc{T}^{(i)}_{\mb{i}^k \vec{v},\ \mb{i}^k \vec{v}} = \beta^{(i)}_k,
\]
and zero otherwise. Also, for $\mb{i}^{k+1} \vec{v} \in \Omega$, $v(1) \neq i$, $k \geq 1$, which is equivalent to $\mb{i}^{k} \vec{v} \in \Omega \backslash \partial \Omega$, denote
\[
\mc{C}_{\mb{i}^k \vec{v}} = \gamma^{(i)}_k
\]
and zero otherwise. It then follows from Theorem~2 of \cite{AnsMonic} that $\set{P_{\vec{u}}}$ are orthogonal with respect to the state $\phi_{\set{\mc{T}_i}, \mc{C}}$. Since they are centered with respect to $\phi_\Omega$, it follows that they are orthogonal with respect to it, and $\phi_\Omega = \phi_{\set{\mc{T}_i}, \mc{C}}$. Finally, part (b) of the proposition follows from the fact that
\[
\norm{P_{(u(1), u(2), \ldots, u(n))}}^2 = \prod_{i=1}^n \mc{C}_{(u(i), \ldots, u(n))}. \qedhere
\]
\end{proof}

\br
The second condition in Definition~\ref{Defn:Polynomials} is not strictly necessary; the reason for its introduction is the following result.

\begin{Prop}
For generic $\mu_1, \mu_2$, product-type states $\phi_\Omega$ are different for different $\Omega$.
\end{Prop}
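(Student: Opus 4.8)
The plan is to reduce the injectivity of the assignment $\Omega \mapsto \phi_\Omega$ to the parameterization of MOPS-states by their recursion coefficients. By Theorem~2 of \cite{AnsMonic}, a state with MOPS is determined by its matrices $\set{\mc{T}^{(i)}}, \mc{C}$, so $\phi_\Omega = \phi_{\Omega'}$ forces these matrices to agree for $\Omega$ and $\Omega'$; it therefore suffices to recover $\Omega$ from $\set{\mc{T}^{(i)}}, \mc{C}$. Throughout, \emph{generic} will mean that none of the recursion coefficients $\beta^{(i)}_k$ of \eqref{Recursion} vanishes. This is a generic condition on $\mu_1, \mu_2$: by Favard's theorem the Jacobi parameters of a faithful state are arbitrary real $\beta^{(i)}_k$ and positive $\gamma^{(i)}_k$, and the locus where $\beta^{(i)}_k = 0$ for some $k$ is a proper closed subset, containing for instance all symmetric $\mu_i$.

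The key input is the explicit form of the matrices computed in the proof of Proposition~\ref{Prop:Orthogonal}. Writing a word $\vec{s}$ as $\mb{i}^k \vec{v}$ with exactly $k$ leading copies of the letter $i$, that computation gives
\[
\mc{T}^{(i)}_{\vec{s}, \vec{s}} = \beta^{(i)}_k \text{ if } (i, \vec{s}) \in \Omega, \qquad \mc{T}^{(i)}_{\vec{s}, \vec{s}} = 0 \text{ otherwise},
\]
and $\mc{C}_{\vec{s}} = \gamma^{(i)}_k$ exactly when $\vec{s} \in \Omega \backslash \partial \Omega$, and $0$ otherwise. For faithful $\mu_i$ the coefficients $\gamma^{(i)}_k$ are automatically nonzero, so the support of $\mc{C}$ already recovers the interior $\Omega \backslash \partial \Omega$. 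Under the genericity hypothesis the $\beta^{(i)}_k$ are also all nonzero, so $\mc{T}^{(i)}_{\vec{s}, \vec{s}} \neq 0$ if and only if $(i, \vec{s}) \in \Omega$. Since every nonempty word of $\Omega$ is uniquely of the form $(i, \vec{s})$ with $\vec{s}$ again in $\Omega$ by heredity, the union over $i \in \set{1, 2}$ of the supports of the diagonals of $\mc{T}^{(i)}$ is precisely the set of nonempty words of $\Omega$; adjoining the root, which belongs to every such $\Omega$, recovers $\Omega$ in full.

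It follows that distinct $\Omega$ give distinct support patterns for $\set{\mc{T}^{(i)}}$, hence distinct coefficient matrices, hence distinct states $\phi_\Omega$ by the parameterization. The step where genericity is genuinely used, and the main obstacle to removing it, is the recovery of the boundary $\partial \Omega$: the interior $\Omega \backslash \partial \Omega$ is always visible through the norms recorded by $\mc{C}$, but the leaves in $\partial \Omega$ are detected only through the coefficients $\beta^{(i)}_k$ attached to the terminal words $(i, \vec{s})$. When all $\beta^{(i)}_k$ vanish, as for symmetric $\mu_i$, the matrices $\mc{T}^{(i)}$ collapse and $\phi_\Omega$ sees only $\Omega \backslash \partial \Omega$; two hereditary sets with the same interior but different leaves then yield the same state, so the genericity assumption cannot simply be dropped.
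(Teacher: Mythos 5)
Your strategy --- recover $\Omega$ from the recursion matrices $\set{\mc{T}^{(i)}}, \mc{C}$ --- is different from the paper's, which argues directly: if $(i,\vec{u}) \in \Omega \backslash \Omega'$ then $\norm{x_i P_{\vec{u}}}_{\phi_{\Omega'}} = 0$, and expanding $x_i P_{\vec{u}}$ in the $\phi_\Omega$-orthogonal basis forces $\norm{P_{\vec{u}}} = 0$ when $\beta^{(i)}_k \neq 0$, contradicting $\vec{u} \in \Omega \backslash \partial\Omega$. Your route could work, but as written it has a gap at its very first step. You infer ``$\phi_\Omega = \phi_{\Omega'}$ forces the matrices to agree'' from ``a state with MOPS is determined by its matrices.'' That is the wrong direction of determination: Theorem~2 of \cite{AnsMonic} gives a map from matrix data to states, not injectivity of that map. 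And injectivity genuinely fails here: the states $\phi_\Omega$ are not faithful, their MOPS are not unique (one may add null polynomials of lower degree), and consequently the entries of $\mc{T}^{(i)}$ indexed by words $\vec{s}$ with $\norm{P_{\vec{s}}} = 0$ are invisible to the state. For example, for the Boolean $\Omega$ one has $x_1 P_{(2,1)} = P_{(1,2,1)}$ with $\mc{T}^{(1)}_{(2,1),(2,1)} = 0$, but replacing $P_{(1,2,1)}$ by $P_{(1,2,1)} + c P_{(2,1)}$ (still a legitimate MOPS, since $P_{(2,1)}$ is null) changes that entry to $-c$ without changing the state. So ``distinct support patterns for $\set{\mc{T}^{(i)}}$'' does not by itself yield distinct states.

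The gap is repairable, and the repair is the real content of the argument. First, the norms $\norm{P_{\vec{u}}}$ of any MOPS family are well defined by the state (two monic choices differ by a null vector), so the set $\set{\vec{u} : \norm{P_{\vec{u}}} \neq 0} = \Omega \backslash \partial\Omega$ is state-determined; this replaces your appeal to ``the support of $\mc{C}$.'' Second, for $\vec{s} \in \Omega \backslash \partial\Omega$ the entry $\mc{T}^{(i)}_{\vec{s},\vec{s}} = \ip{P_{\vec{s}}}{x_i P_{\vec{s}}} / \norm{P_{\vec{s}}}^2$ is state-determined. Third --- and this is a point you need but do not state --- by heredity together with condition (b) of Definition~\ref{Defn:Polynomials}, every nonempty $\vec{w} \in \Omega$ has the form $(i, \vec{s})$ with $\vec{s} \in \Omega \backslash \partial\Omega$ (this is the paper's remark that $\Omega \backslash \partial\Omega$ is hereditary), so the state-determined entries suffice to reconstruct all of $\Omega$ once all $\beta^{(i)}_k \neq 0$. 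With these three points supplied your proof closes; your identification of the genericity condition ($\beta^{(i)}_k \neq 0$, with $\gamma^{(i)}_k > 0$ automatic by faithfulness) and your observation that symmetric $\mu_i$ give genuine counterexamples both match the paper.
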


\begin{proof}
Suppose $\phi_\Omega = \phi_{\Omega'}$ but $\Omega \neq \Omega'$. By the hereditary condition, there is an $i$ and a $\vec{u} \in \Omega \cap \Omega'$ such that $(i, \vec{u}) \in \Omega$, $(i, \vec{u}) \not \in \Omega'$. Note that the norms induced by $\phi_\Omega, \phi_{\Omega'}$ are the same. By assumption, $\norm{x_i P_{\vec{u}}} = 0$. On the other hand, if $u(1) = i$, then
\[
x_i P_{\vec{u}} = P_{(i, \vec{u})} + \beta^{(i)}_k P_{\vec{u}} + \gamma^{(i)}_k P_{(u(2), u(3), \ldots)}
\]
for some $k$. Since $\set{P_{(i, \vec{u})}, P_{\vec{u}}, P_{(u(2), u(3), \ldots)}}$ are $\phi_\Omega$-orthogonal to each other, and generically $\beta^{(i)}_k, \gamma^{(i)}_k \neq 0$, this implies that all of them, in particular $P_{\vec{u}}$, have norm zero, which contradicts Proposition~\ref{Prop:Orthogonal} since $\vec{u} \in \Omega \backslash \partial \Omega$.

\br
If $u(1) \neq i$, then
\[
x_i P_{\vec{u}} = P_{(i, \vec{u})} + \beta^{(i)}_0 P_{\vec{u}}.
\]
This again implies that $\norm{P_{\vec{u}}} = 0$. However, if $(i, \vec{u}) \in \Omega$, then by the second condition in Definition~\ref{Defn:Polynomials}, also $(u(1), \vec{u}) \in \Omega$, so that $\vec{u} \in \Omega \backslash \partial \Omega$ and we again get a contradiction with Proposition~\ref{Prop:Orthogonal}. 
\end{proof}

\subsection{Continued fractions}
For a state $\phi$ on $\mf{R} \langle x_1, x_2, \ldots, x_d \rangle$, its moment generating function is
\[
M^\phi(z_1, z_2, \ldots, z_d) = 1 + \sum_{i=1}^d \state{x_i} z_i + \sum_{i, j = 1}^d \state{x_i x_j} z_i z_j + \ldots.
\]
It is a classical result that in the one-variable case, such a moment-generating function has a (at least formal) continued fraction expansion
\[
\begin{split}
M^\mu(z)
& = 1 + \mu[x] z + \mu[x^2] z^2 + \mu[x^3] z^3 + \ldots \\
& =
\cfrac{1}{1 - \beta_0 z -
\cfrac{\gamma_1 z^2}{1 - \beta_1 z -
\cfrac{\gamma_2 z^2}{1 - \beta_2 z -
\cfrac{\gamma_3 z^2}{1 - \ldots}}}}
\end{split}
\]
where moreover the coefficients are exactly those in the recursion relation for its monic orthogonal polynomials. Except for this last statement, the continued fraction expansion can be obtained by induction, and the only way it would break down is if some $\gamma_n = 0$, which would indicate that the measure $\mu$ is finitely supported. One may hope that similarly, in the multivariate case one always has a branched continued fraction expansion (see \cite{Skorobogat'ko} and many papers of the same school) such as
\[
\cfrac{1}{1 - \sum_{i_1=1}^d b_{i_1} z_{i_1} - \sum_{j_1, k_1=1}^d
\cfrac{c_{j_1, k_1} z_{j_1} z_{k_1}}{1 - \sum_{i_2=1}^d b_{i_2 i_1} z_{i_2} - \sum_{j_2, k_2=1}^d
\cfrac{c_{j_2 j_1, k_2 k_1} z_{j_2} z_{k_2}}{1 - \ldots}}}
\]
However, such an expansion need not exist in general. Indeed, any power series
\[
1 + \sum a_i z_i + \sum a_{ij} z_i z_j + \ldots
\]
can be written as
\[
\frac{1}{1 - \sum b_i z_i - \sum b_{ij} z_i z_j - \sum b_{ijk} z_i z_j z_k \ldots},
\]
but this need not be equal to some
\[
\frac{1}{1 - \sum b_i z_i - \sum b_{ij} z_i F_{ij} z_j}
\]
if, for example, $b_{ij} = 0$ but $b_{ikj} \neq 0$ for some $k$. Even for a general state with MOPS, such a scalar expansion need not exist; however, one always has a matricial continued fraction.

\begin{Thm*}[Theorem 12 of \cite{AnsBoolean}]
Let $\phi = \phi_{\set{\mc{T}_i}, \mc{C}}$ be a state with MOPS. Then its moment generating function has a matricial continued fraction expansion
\[
\cfrac{1}{1 - \sum_{i_0} z_{i_0} \mc{T}_{i_0}^{(0)} -
\cfrac{\sum_{j_1} z_{j_1} E_{j_1} \mc{C}^{(1)} | \sum_{k_1} E_{k_1} z_{k_1}}{1 - \sum_{i_1} z_{i_1} \mc{T}_{i_1}^{(1)} -
\cfrac{\sum_{j_2} z_{j_2} E_{j_2} \mc{C}^{(2)} | \sum_{k_2} E_{k_2} z_{k_2}}{1 - \sum_{i_2} z_{i_2} \mc{T}_{i_2}^{(2)} -
\cfrac{\sum_{j_3} z_{j_3} E_{j_3} \mc{C}^{(3)} | \sum_{k_3} E_{k_3} z_{k_3}}{1 - \ldots}}}}
\]
\end{Thm*}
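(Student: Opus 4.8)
The plan is to realise $\phi$ on the graded Hilbert space furnished by Theorem~2 of \cite{AnsMonic} and to extract the continued fraction from the block-tridiagonal structure of the multiplication operators. Write $\mc{H} = \bigoplus_{k \ge 0}\mc{H}_k$, where $\mc{H}_k$ is the $d^k$-dimensional space spanned by the normalized degree-$k$ orthogonal polynomials and $\mc{H}_0 = \mf{R}\Omega$, and let $X_i$ denote the operator of multiplication by $x_i$. The recursion relations for the MOPS (the multivariate analogue of~\eqref{Recursion}) say exactly that $X_i = a_i^\ast + \mc{T}_i + a_i$, where the grade-raising part $a_i^\ast\colon \mc{H}_k \to \mc{H}_{k+1}$ is the creation operator built from the embeddings $E_i$, the grade-preserving part is $\mc{T}_i$, and the grade-lowering part $a_i\colon \mc{H}_k \to \mc{H}_{k-1}$ is assembled from the $E_i$ and the matrices $\mc{C}^{(k)}$ according to the commutation relation. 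Hence $1 - \sum_i z_i X_i$ is block tridiagonal for the grading, its diagonal block on $\mc{H}_k$ being $1 - \sum_i z_i \mc{T}_i^{(k)}$ and its two off-diagonal blocks between $\mc{H}_k$ and $\mc{H}_{k+1}$ each carrying a single factor of $z$.

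Next I would rewrite the moment generating function as a vacuum matrix element of the resolvent. Because $\state{x_{i_1}\cdots x_{i_n}} = \ip{\Omega}{X_{i_1}\cdots X_{i_n}\Omega}$, summing the defining series term by term gives, as a formal power series in the $z_i$,
\[
M^\phi(z_1, \ldots, z_d) = \ip{\Omega}{\left(1 - \sum_i z_i X_i\right)^{-1}\Omega},
\]
which is the grade-$0$ diagonal block of the resolvent of the block-tridiagonal operator just described.

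The heart of the argument is the block Schur-complement identity for tridiagonal operators. Partitioning $1 - \sum_i z_i X_i$ into its grade-$0$ block and the remaining block on $\bigoplus_{k \ge 1}\mc{H}_k$, the grade-$0$ block of the inverse equals $\bigl(A_{00} - A_{01} G_1 A_{10}\bigr)^{-1}$, where $A_{00} = 1 - \sum_{i_0} z_{i_0}\mc{T}_{i_0}^{(0)}$, the blocks $A_{01}, A_{10}$ are the off-diagonal couplings, and $G_1$ is the grade-$1$ diagonal block of the resolvent of the operator with grade $0$ deleted. Since deleting grade $0$ leaves an operator of the same tridiagonal form shifted up by one level, this identity iterates: at level $k$ the diagonal block contributes $1 - \sum_{i_k} z_{i_k}\mc{T}_{i_k}^{(k)}$ to the denominator, while the sandwiched product $A_{k,k+1}\,G_{k+1}\,A_{k+1,k}$ contributes the two-sided numerator $\bigl(\sum_j z_j E_j \mc{C}^{(k+1)}\bigr)\,\big|\,\bigl(\sum_m E_m z_m\bigr)$. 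Iterating to all orders produces exactly the displayed matricial continued fraction.

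I expect the main obstacle to be the bookkeeping in this final identification: one must verify that the composite of an annihilation block, a deeper resolvent block, and a creation block assembles into the precise numerator $\sum_j z_j E_j \mc{C}^{(k+1)} \mid \sum_m E_m z_m$, with the embeddings and the single factor $\mc{C}^{(k+1)}$ landing on the correct sides; this is exactly the point at which the commutation relation between the $\mc{T}_i$ and $\mc{C}$ matrices must be invoked to move $\mc{C}$ past the diagonal blocks. Convergence is not a concern, since the statement is an identity of formal power series: for each fixed total degree in the $z_i$ only finitely many levels of the fraction contribute, so one truncates and compares coefficients. The one nonformal point, invertibility of each Schur complement $1 - \sum_{i_k} z_{i_k}\mc{T}_{i_k}^{(k)} - (\cdots)$ as a formal matrix power series, is immediate because its constant term is the identity.
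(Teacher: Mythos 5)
This theorem is quoted from \cite{AnsBoolean}; the present paper gives no proof of it, so there is no internal argument to compare against. Your resolvent/Schur-complement derivation is the standard and correct route: realize $\phi$ on the graded space from Theorem~2 of \cite{AnsMonic} with $X_i = a_i^{\ast} + \mc{T}_i + a_i$ block-tridiagonal, write $M^\phi$ as the vacuum block of $(1-\sum_i z_i X_i)^{-1}$, and iterate the Schur complement level by level; this is presumably essentially the proof in the cited reference. One small correction to your anticipated difficulty: the commutation relation between the $\mc{T}_i$ and $\mc{C}$ is what makes the $X_i$ symmetric and hence $\phi_{\set{\mc{T}_i},\mc{C}}$ positive, but it is not needed to place $\mc{C}^{(k+1)}$ in the numerator --- that factor lands on the left of the bar automatically because the annihilation block out of $\mc{H}_{k+1}$ carries $\mc{C}^{(k+1)}$ while the creation block into $\mc{H}_{k+1}$ is the bare embedding $E_k$; the whole identity is one of formal power series and needs no analytic input, exactly as you say.
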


\br
Here the vertical bar indicates where to insert the denominator. More precisely, for matrices
\[
A, B \in M_{d^k \times d^k} \simeq M_{d \times d} \otimes M_{d \times d} \otimes \ldots \otimes M_{d \times d},
\]
we use the notation
\[
\frac{E_i A | E_j}{B} = \ip{e_i \otimes I \otimes \ldots \otimes I}{A B^{-1} (e_j \otimes I \otimes \ldots \otimes I)} \in M_{d^{k-1} \times d^{k-1}}.
\]
For example, for $d=2$, $k=2$,
\[
E_1
\begin{pmatrix}
a_{11} & a_{12} & a_{13} & a_{14} \\
a_{21} & a_{22} & a_{23} & a_{24} \\
a_{31} & a_{32} & a_{33} & a_{34} \\
a_{41} & a_{42} & a_{43} & a_{44}
\end{pmatrix}
E_2 =
\begin{pmatrix}
a_{13} & a_{14} \\
a_{23} & a_{24} \\
\end{pmatrix}.
\]

\begin{Cor}
Let $\phi = \phi_{\set{\mc{T}_i}, \mc{C}}$ be a state such that all the matrices $\mc{T}_i^{(k)}$ are diagonal. Denote their entries by $B^{(i)}_{\vec{u}}$, and the entries of (also diagonal) matrices $\mc{C}^{(k)}$ by $\mc{C}_{\vec{u}}$. The moment generating function of $\phi = \phi_{\set{\mc{T}_i}, \mc{C}}$ has a scalar continued fraction expansion
\[
\cfrac{1}{1 - \sum_{i_0} B^{(i_0)}_{\emptyset} z_{i_0} - \sum_{j_1} \mc{C}_{j_1}
\cfrac{z_{j_1} | z_{j_1}}{1 - \sum_{i_1} B^{(i_1)}_{j_1} z_{i_1} - \sum_{j_2} \mc{C}_{j_2 j_1}
\cfrac{z_{j_2} | z_{j_2}}{1 - \sum_{i_2} B^{(i_2)}_{j_2 j_1} z_{i_2} - \sum_{j_3} \mc{C}_{j_3 j_2 j_1}
\cfrac{z_{j_3} | z_{j_3}}{1 - \ldots}}}}
\]
\end{Cor}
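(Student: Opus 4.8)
The plan is to start from the matricial continued fraction of Theorem~12 of \cite{AnsBoolean} and to show that, under the diagonality hypothesis, it collapses entrywise to the asserted scalar one. Write the level-$k$ denominator appearing in that expansion as
\[
\mc{D}^{(k)} = 1 - \sum_{i} z_{i} \mc{T}_{i}^{(k)} - \mc{N}^{(k+1)}, \qquad \mc{N}^{(k+1)} = \sum_{j, l} z_{j} z_{l} \frac{E_{j} \mc{C}^{(k+1)} \mid E_{l}}{\mc{D}^{(k+1)}},
\]
a $d^{k} \times d^{k}$ matrix of formal power series with constant term $I$ (hence invertible as a formal power series), and recall that the moment generating function is $\bigl(\mc{D}^{(0)}\bigr)^{-1}$. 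The entire argument reduces to showing that every $\mc{D}^{(k)}$ is diagonal; once this is known, reading off the diagonal entries produces exactly the scalar recursion in the statement.

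The key computation concerns the reduction $\frac{E_{j} A \mid E_{l}}{B} = \ip{e_{j} \otimes I \otimes \cdots \otimes I}{A B^{-1} (e_{l} \otimes I \otimes \cdots \otimes I)}$, which extracts the $(j,l)$-block of $A B^{-1}$ in the decomposition $M_{d^{k+1}} \simeq M_{d} \otimes M_{d^{k}}$, contracting the first tensor factor. If $A$ and $B$ are both diagonal then so is $A B^{-1}$; being block-diagonal, its $(j,l)$-block vanishes for $j \neq l$ and, for $j = l$, equals the diagonal $d^{k} \times d^{k}$ matrix whose $(\vec{w}, \vec{w})$-entry is $\bigl(A B^{-1}\bigr)_{(j,\vec{w}),(j,\vec{w})}$. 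Applying this with $A = \mc{C}^{(k+1)}$ and $B = \mc{D}^{(k+1)}$ forces $\mc{N}^{(k+1)}$ to be diagonal, with
\[
\bigl(\mc{N}^{(k+1)}\bigr)_{\vec{w}, \vec{w}} = \sum_{j} z_{j}^{2} \, \frac{\mc{C}_{(j, \vec{w})}}{\bigl(\mc{D}^{(k+1)}\bigr)_{(j,\vec{w}),(j,\vec{w})}} ,
\]
provided $\mc{D}^{(k+1)}$ is itself diagonal.

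Since $\mc{C}^{(k)}$ is diagonal by construction and each $\mc{T}_{i}^{(k)}$ is diagonal by hypothesis, this recursion propagates diagonality: assuming $\mc{D}^{(k+1)}$ diagonal makes $\mc{N}^{(k+1)}$ diagonal, hence $\mc{D}^{(k)}$ diagonal. To turn this into a proof for the infinite continued fraction I would argue at the level of formal power series: because $\mc{N}^{(k+1)}$ is of order $\geq 2$ in the $z_{i}$, the coefficient of any fixed monomial in $\mc{D}^{(k)}$ depends only on finitely many lower-order coefficients of the deeper denominators, so an induction on total degree (equivalently, a downward induction on finite truncations of the fraction) establishes that every $\mc{D}^{(k)}$ is diagonal. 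Writing $\bigl(\mc{D}^{(k)}\bigr)_{\vec{u},\vec{u}}$ for its diagonal entries and using $B^{(i)}_{\vec{u}} = \bigl(\mc{T}^{(k)}_{i}\bigr)_{\vec{u},\vec{u}}$ and $\mc{C}_{\vec{u}} = \bigl(\mc{C}^{(k)}\bigr)_{\vec{u},\vec{u}}$, the recursion above becomes
\[
\bigl(\mc{D}^{(k)}\bigr)_{\vec{u},\vec{u}} = 1 - \sum_{i} B^{(i)}_{\vec{u}} z_{i} - \sum_{j} \mc{C}_{(j,\vec{u})} \, \frac{z_{j} z_{j}}{\bigl(\mc{D}^{(k+1)}\bigr)_{(j,\vec{u}),(j,\vec{u})}},
\]
which is precisely the scalar continued fraction of the statement, its top entry $\bigl(\mc{D}^{(0)}\bigr)_{\emptyset,\emptyset}^{-1}$ being the moment generating function. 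The only genuinely delicate point is the index bookkeeping: one must check that contracting the first tensor factor corresponds to prepending a letter, so that the block index $j$ attaches to $\vec{u}$ on the correct side to reproduce the subscripts $j_{k} j_{k-1} \cdots j_{1}$ and $\mc{C}_{(j,\vec{u})}$, $B^{(i)}_{\vec{u}}$ exactly as written. Granting this matching, the collapse from the matricial to the scalar fraction is immediate.
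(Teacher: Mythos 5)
Your argument is correct and is precisely the one the paper intends: the Corollary is stated without proof as an immediate consequence of Theorem~12 of \cite{AnsBoolean}, the point being exactly that diagonality of the $\mc{T}_i^{(k)}$ and $\mc{C}^{(k)}$ makes every block $E_j \mc{C}^{(k+1)}(\mc{D}^{(k+1)})^{-1} E_l$ vanish for $j \neq l$ and keeps all denominators diagonal, so the matricial fraction collapses entrywise to the scalar one. Your formal-power-series induction on degree and the index bookkeeping (prepending the letter $j$, matching the subscripts $j_k \cdots j_1$) are handled correctly.
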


\begin{Prop}
If $\phi = \phi_\Omega$ is a product-type state, its moment generating function has a scalar continued fraction expansion corresponding to the subtree $\Omega$ of the binary tree.
\end{Prop}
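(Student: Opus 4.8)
\emph{Proof proposal.} The plan is to deduce the statement almost immediately from the proof of Proposition~\ref{Prop:Orthogonal} together with the preceding Corollary, the only real work being the interpretation of the coefficients. Recall that in the proof of Proposition~\ref{Prop:Orthogonal} we exhibited matrices $\mc{T}_i^{(k)}$ and $\mc{C}^{(k)}$ with $\phi_\Omega = \phi_{\set{\mc{T}_i}, \mc{C}}$, and — this is the crucial structural point — these matrices are \emph{diagonal}, their only nonzero entries being $\mc{T}^{(i)}_{\mb{i}^k\vec{v},\,\mb{i}^k\vec{v}} = \beta^{(i)}_k$ and $\mc{C}_{\mb{i}^k\vec{v}} = \gamma^{(i)}_k$. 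Diagonality is exactly the hypothesis of the Corollary, so I would first invoke that Corollary to obtain for $\phi_\Omega$ a scalar branched continued fraction whose coefficients are the diagonal entries $B^{(i)}_{\vec{u}} = \mc{T}^{(i)}_{\vec{u},\vec{u}}$ and $\mc{C}_{\vec{u}}$.

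The second step is to read these coefficients back in terms of the tree $\Omega$. From the formulas above, $B^{(i)}_{\vec{u}} = \beta^{(i)}_k$, with $k$ the number of leading $i$'s in $\vec{u}$, precisely when $(i,\vec{u}) \in \Omega$, and $B^{(i)}_{\vec{u}} = 0$ otherwise; likewise $\mc{C}_{\vec{u}} = \gamma^{(i)}_k$ precisely when $\vec{u} \in \Omega \backslash \partial\Omega$, and $\mc{C}_{\vec{u}} = 0$ otherwise. Consequently every branching contribution with coefficient $\mc{C}_{(j,\vec{w})}$ that would descend from a word $\vec{w}$ to a child $(j,\vec{w}) \notin \Omega \backslash \partial\Omega$ vanishes. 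Thus the fraction branches from $\vec{w}$ to $(j,\vec{w})$ exactly through the interior vertices $\Omega \backslash \partial\Omega$ and terminates upon reaching the leaves $\partial\Omega$; this is the assertion that its branching structure is the subtree $\Omega$.

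Finally I would record the consistency check that makes the labelling natural: along a single-letter path $\emptyset \to (i) \to (i,i) \to \cdots$ the surviving coefficients are $B^{(i)}_{\mb{i}^k} = \beta^{(i)}_k$ and $\mc{C}_{\mb{i}^{k+1}} = \gamma^{(i)}_{k+1}$, which reproduce exactly the classical scalar continued fraction of $M^{\mu_i}(z_i)$, as they must since $\phi_\Omega$ restricts to $\mu_i$ on $\mf{R}[x_i]$. The step requiring care is not any new computation but the bookkeeping of the Corollary's indexing convention: one must verify that the number of leading repeated letters in $\vec{u}$ selects the correct depth-$k$ recursion coefficients $\beta^{(i)}_k, \gamma^{(i)}_k$, and that the cutoff $\mc{C}_{\vec{u}} = 0$ for $\vec{u} \in \partial\Omega$ matches the classical phenomenon by which a vanishing $\gamma$ truncates the fraction — here truncating each branch exactly at the boundary of $\Omega$. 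With the explicit diagonal matrices already in hand this is the only step with content, and it is purely a matter of matching notation.
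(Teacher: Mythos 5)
Your proposal is correct and follows essentially the same route as the paper: both deduce from the proof of Proposition~\ref{Prop:Orthogonal} that $\phi_\Omega = \phi_{\set{\mc{T}_i},\mc{C}}$ with all $\mc{T}_i^{(k)}$ diagonal, apply the scalar continued fraction expansion for diagonal data, and then identify the surviving coefficients $\beta^{(i)}_k$, $\gamma^{(i)}_k$ with the vertices of $\Omega$, with branches terminating exactly at $\partial\Omega$ where $\mc{C}_{\vec{u}}=0$. Your index bookkeeping matches the paper's (which records the entry at $\mb{i}^k\vec{v}$ as $\beta^{(i)}_{k-1}z_i + \gamma^{(i)}_k z_i\,|\,z_i/(1-\ldots)$ when $\mb{i}^k\vec{v}\in\Omega\backslash\partial\Omega$), so no changes are needed.
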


\begin{proof}
From the proof of Proposition~\ref{Prop:Orthogonal} it follows that $\phi_\Omega = \phi_{\set{\mc{T}_i}, \mc{C}}$, with all $\mc{T}^{(k)}_i$ diagonal. As a result, in the preceding theorem, the continued fraction has the branched form
\[
\cfrac{1}
{1 - \left(\beta^{(1)}_0 z_1 +
\cfrac{\gamma^{(1)}_{1} {z_1 | z_1}}
{1 - \left(\beta^{(1)}_1 z_1 +
\cfrac{\gamma^{(1)}_{2} {z_1 | z_1}}
{1 - \ldots}\right)
- \ldots}\right)
- \left(\beta^{(2)}_0 z_2 +
\cfrac{\gamma^{(2)}_{1} {z_2 | z_2}}
{1 - \left(\beta^{(1)}_0 z_1 +
\cfrac{\gamma^{(1)}_{1} {z_1 | z_1}}
{1 - \ldots}
\right) - \ldots}
\right)}
\]
The branching of the fraction corresponds to the subtree $\Omega$ of the binary tree, and the entry in the fraction corresponding to the word $(\mb{i}^k \vec{v})$ with $k \geq 1$, $v(1) \neq i$ is
\[
\beta^{(i)}_{k-1} z_i + \frac{\gamma^{(i)}_k z_i | z_i}{1 - \ldots}
\]
if $(\mb{i}^k \vec{v}) \in \Omega \backslash \partial \Omega$ and is simply
\[
\beta^{(i)}_{k-1} z_i
\]
if $(\mb{i}^k \vec{v}) \in \partial \Omega$.
\end{proof}

\section{Examples}

\noindent
All the examples in this section are described for $d=2$ for simplicity.

\begin{figure}[hhh]
\psfig{figure=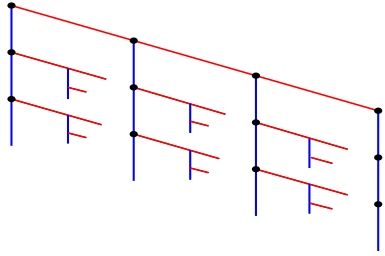,height=0.2\textwidth,width=0.3\textwidth}
\caption{Subtree for the free product}
\end{figure}

\begin{Example}[Free product]
The free product $\phi$ of $\mu_1, \mu_2$ is determined by the condition that if $\set{S_n, R_k: n, k \geq 1}$ are polynomials such that $\mu_1[S_n(x_1)] = 0$ and $\mu_2[R_k(x_1)] = 0$, then $\phi$ is zero on any alternating product of the form
\[
S_1(x_1) R_1(x_2) S_2(x_1) \ldots \text{ or } R_1(x_2) S_1(x_1) R_2(x_2) \ldots.
\]
In this case
\[
\Omega = \mf{FS}(1,2)
\]
and the corresponding polynomials are all alternating products of the form
\[
P^{(1)}_{s(1)}(x_1) P^{(2)}_{t(1)}(x_2) P^{(1)}_{s(2)}(x_1) \ldots P^{(2)}_{t(n)}(x_2),
\]
with $s(2), \ldots, s(n), t(1), \ldots, t(n-1) \geq 1$. Indeed, it follows immediately from the definition of the free product that these polynomials are centered with respect to $\phi$, so $\phi_\Omega = \phi$. The continued fraction for the moment generating function of $\phi$ is
\[
\cfrac{1}
{1 - \beta^{(1)}_0 z_1 -
\cfrac{\gamma^{(1)}_{1} {z_1 | z_1}}
{1 - \beta^{(1)}_1 z_1 -
\cfrac{\gamma^{(1)}_{2} {z_1 | z_1}}
{1 - \ldots}
- \beta^{(2)}_0 z_2 -
\cfrac{\gamma^{(2)}_{1} {z_2 | z_2}}
{1 - \ldots}}
- \beta^{(2)}_0 z_2 -
\cfrac{\gamma^{(2)}_{1} {z_2 | z_2}}
{1 - \beta^{(1)}_0 z_1 -
\cfrac{\gamma^{(1)}_{1} {z_1 | z_1}}
{1 - \ldots}
- \ldots}}
\]
In particular, if all $\beta \equiv 0$, then the continued fraction has a more transparent form
\[
\cfrac{1}
{1 -
\cfrac{\gamma_1^{(1)} {z_1 | z_1}}
{1 -
\cfrac{\gamma_2^{(1)} {z_1 | z_1}}
{1 -
\cfrac{\gamma_3^{(1)} {z_1 | z_1}}
{1 - \ldots}
-
\cfrac{\gamma_1^{(2)} {z_2 | z_2}}
{1 - \ldots}}
-
\cfrac{\gamma_1^{(2)} {z_2 | z_2}}
{1 - \ldots}}
- \cfrac{\gamma_1^{(2)} {z_2 | z_2}}
{1 -
\cfrac{\gamma_1^{(1)} {z_1 | z_1}}
{1 -
\cfrac{\gamma_2^{(1)} {z_1 | z_1}}
{1 - \ldots}
-
\cfrac{\gamma_1^{(2)} {z_2 | z_2}}
{1 - \ldots}}
-
\cfrac{\gamma_2^{(2)} {z_2 | z_2}}
{1 - \ldots}}}
\]
\end{Example}

\begin{figure}[hhh]
\psfig{figure=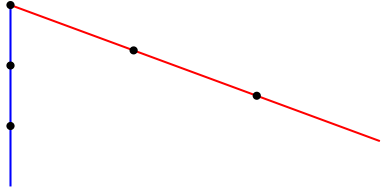,height=0.15\textwidth,width=0.3\textwidth}
\caption{Subtree for the Boolean product}
\end{figure}

\begin{Example}[Boolean product]
The Boolean product $\phi$ of $\mu_1, \mu_2$ is determined by the condition that
\[
\phi[Q(x_1) x_2^{s(1)} x_1^{t(2)} \ldots x_1^{t(n)} R(x_2)]
= \mu_1[Q(x_1)] \mu_2[x_2^{s(1)}] \mu_1[x_1^{t(2)}] \ldots \mu_1[x_1^{t(n)}] \mu_2[R(x_2)],
\]
where all $t(n), s(k) \geq 1$ and $Q, R$ are arbitrary. Note that this is not quite the usual definition of Boolean independence, but it easily seen to be equivalent to it; see \cite{AnsBoolean} or \cite{Popa-multiplicative-boolean}. In this case
\[
\Omega = \set{\mb{1}^n, \mb{2}^n : n \geq 0}
\]
and so the corresponding polynomials are simply
\[
P^{(1)}_k(x_1), \quad P^{(2)}_n(x_2).
\]
For $n \geq 1$,
\[
\phi[x_1^{s(1)} x_2^{t(1)} \ldots x_1^{s(k)} x_2^{t(k)} P^{(1)}_n(x_1)] = 0
\]
since $\mu_1[P^{(1)}_n(x_1)] = 0$, and the same property holds for polynomials ending in $P^{(2)}_n(x_2)$, so it follows that these polynomials are centered with respect to $\phi$ and $\phi = \phi_\Omega$. The continued fraction for the moment generating function of $\phi$ is simply
\[
\cfrac{1}
{1 - \beta^{(1)}_0 z_1 -
\cfrac{\gamma_1^{(1)} {z_1^2}}
{1 - \beta^{(1)}_1 z_1 -
\cfrac{\gamma_2^{(1)} {z_1^2}}
{1 - \beta^{(1)}_2 z_1 -
\cfrac{\gamma_3^{(1)} {z_1^2}}
{1 - \ldots}}}
- \beta^{(2)}_0 z_2 -
\cfrac{\gamma_1^{(2)} {z_2^2}}
{1 - \beta^{(2)}_1 z_2 -
\cfrac{\gamma_2^{(2)} {z_2^2}}
{1 - \beta^{(2)}_2 z_2 -
\cfrac{\gamma_3^{(2)} {z_2^2}}
{1 - \ldots}}}}
\]
\end{Example}

\begin{figure}[hhh]
\psfig{figure=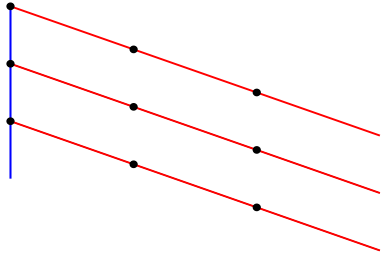,height=0.2\textwidth,width=0.3\textwidth}
\caption{Subtree for the monotone product}
\end{figure}

\begin{Example}[Monotone product]
$\phi$ is determined by the condition that
\[
\phi[Q(x_1) x_2^{s(1)} x_1^{t(2)} x_2^{s(2)} \ldots x_1^{t(n)} R(x_2)]
= \mu_1[Q(x_1) x_1^{t(2)} \ldots x_1^{t(n)}] \mu_2[x_2^{s(1)}] \mu_2[x_2^{s(2)}] \ldots \mu_2[R(x_2)],
\]
where all $t(i), s(j) \geq 1$ and $Q, R$ are arbitrary. Again this is not quite the usual definition of monotone independence, but is easily seen to be equivalent to it. In this case
\[
\Omega = \set{\mb{2}^k \mb{1}^n: k, n \geq 0}
\]
and the corresponding polynomials are products
\[
P^{(2)}_k(x_2) P^{(1)}_n(x_1).
\]
If $k \geq 1$, let $P^{(2)}_k(x_2) = \sum_{i=0}^k a_i x_2^i$. Then
\[
\begin{split}
\phi[x_1^{s(1)} x_2^{t(1)} \ldots x_1^{s(j)} P^{(2)}_k(x_2) P^{(1)}_n(x_1)]
& = \sum_{i=0}^k a_i \phi[x_1^{s(1)} x_2^{t(1)} \ldots x_1^{s(j)} x_2^i P^{(1)}_n(x_1)] \\
& = \sum_{i=0}^k a_i \mu_2[x_2^i] \phi[x_1^{s(1)} x_2^{t(1)} \ldots x_1^{s(j)} P^{(1)}_n(x_1)] \\
& = \mu_2[P^{(2)}_k(x_2)] \phi[x_1^{s(1)} x_2^{t(1)} \ldots x_1^{s(j)} P^{(1)}_n(x_1)] = 0
\end{split}
\]
since $\mu_2[P^{(2)}_k(x_2)] = 0$. It follows that all $\set{P_{\vec{u}}}$ are centered for $\phi = \phi_\Omega$. The continued fraction for the moment generating function of $\phi$, where for clarity we set all $\beta \equiv 0$, is
\[
\cfrac{1}
{1
- \cfrac{\gamma^{(1)}_1 z_1 | z_1}
{1
- \cfrac{\gamma^{(1)}_2 z_1 | z_1}
{1
- \cfrac{\gamma^{(1)}_3 z_1 | z_1}
{1
- \cfrac{\gamma^{(1)}_4 z_1 | z_1}
{1 - \ldots}
- \cfrac{\gamma^{(2)}_1 z_2^2}
{1 - \ldots}}
- \cfrac{\gamma^{(2)}_1 z_2^2}
{1 - \cfrac{\gamma^{(2)}_2 z_2^2}
{1 - \ldots}}}
- \cfrac{\gamma^{(2)}_1 z_2^2}
{1 - \cfrac{\gamma^{(2)}_2 z_2^2}
{1 - \cfrac{\gamma^{(2)}_3 z_2^2}
{1 - \ldots}}}}
- \cfrac{\gamma^{(2)}_1 z_2^2}
{1 - \cfrac{\gamma^{(2)}_2 z_2^2}
{1 - \cfrac{\gamma^{(2)}_3 z_2^2}
{1 - \cfrac{\gamma^{(2)}_4 z_2^2}
{1 - \ldots}}}}}
\]
Anti-monotone product looks very similar, with $1$ and $2$, and right and left, switched.
\end{Example}

\begin{Example}[c-free product]
The c-free product \cite{BLS96}, also known as two-state free product, does not quite fit into our scheme, since in this case we start with two pairs of states, $(\mu_i, \nu_i)$, $i = 1,2$. Nevertheless, it also has the product-type property, as we now explain. Two pairs of states have two pairs of families of orthogonal polynomials
\[
\set{P^{(i)}_n(x_i), Q^{(i)}_k(x_i)}
\]
orthogonal with respect to $\mu_i$, respectively, $\nu_i$, with recursion relations \eqref{Recursion} and
\begin{align*}
x_i Q^{(i)}_n(x_i) & = Q^{(i)}_{n+1}(x_i) + b^{(i)}_n Q^{(i)}_n(x) + c^{(i)}_n Q^{(i)}_{n-1}(x_i).
\end{align*}
The c-free product of these pairs of states is the pair $(\phi, \psi)$, where $\psi$ is the free product $\nu_1 \ast \nu_2$ and $\phi$ is determined by the condition that whenever $\set{S_j, R_j: 1 \leq j \leq n}$ are polynomials such that $\nu_1[S_j(x_1)] = 0$ for $j \geq 2$ and $\nu_2[R_j(x_1)] = 0$ for $j \leq n-1$, then
\begin{multline*}
\phi[S_1(x_1) R_1(x_2) S_2(x_1) \ldots S_n(x_1) R_n(x_2)] \\
= \mu_1[S_1(x_1)] \mu_2[R_1(x_2)] \mu_1[S_2(x_1)] \ldots \mu_1[S_n(x_1)] \mu_2[R_n(x_2)]
\end{multline*}
Again this is not quite the usual definition of c-free independence, so see Lemma~1 of \cite{AnsAppell3}. In this case the orthogonal polynomials  with respect to $\phi$ are alternating products
\[
Q^{(1)}_{s(1)}(x_1) Q^{(2)}_{t(1)}(x_2) Q^{(1)}_{s(2)}(x_1) \ldots Q^{(1)}_{s(n)}(x_1) P^{(2)}_{t(n)}(x_2),
\]
with $s(2), \ldots, s(n), t(1), \ldots, t(n) \geq 1$, or of the same form with $1, 2$ interchanged.  The centeredness, and so orthogonality, of these polynomials with respect to $\phi$ follows directly from the c-free property above, since $\nu_i[Q^{(i)}_s(x_i)] = 0$ and $\mu_2[P^{(2)}_{t(n)}(x_2)] = 0$. The continued fraction, again for all $\beta \equiv b \equiv 0$, is
\[
\cfrac{1}
{1 -
\cfrac{\gamma_1^{(1)} {z_1 | z_1}}
{1 -
\cfrac{\gamma_2^{(1)} {z_1 | z_1}}
{1 -
\cfrac{\gamma_3^{(1)} {z_1 | z_1}}
{1 - \ldots}
-
\cfrac{c_1^{(2)} {z_2 | z_2}}
{1 - \ldots}}
-
\cfrac{c_1^{(2)} {z_2 | z_2}}
{1 - \ldots}}
- \cfrac{\gamma_1^{(2)} {z_2 | z_2}}
{1 -
\cfrac{c_1^{(1)} {z_1 | z_1}}
{1 -
\cfrac{c_2^{(1)} {z_1 | z_1}}
{1 - \ldots}
-
\cfrac{c_1^{(2)} {z_2 | z_2}}
{1 - \ldots}}
-
\cfrac{\gamma_2^{(2)} {z_2 | z_2}}
{1 - \ldots}}}
\]
By looking at the orthogonal polynomials, or at the continued fraction, we note that
\begin{enumerate}
\item
If both $\nu_i = \mu_i$, so that $b^{(i)}_n = \beta^{(i)}_n$, $c^{(i)}_n = \gamma^{(i)}_n$, and $Q^{(i)}_n = P^{(i)}_n$, then $\phi$ is the free product of $\mu_1$ and $\mu_2$.
\item
If $\nu_1 = \nu_2 = \delta_0$, so that $b^{(i)}_n = c^{(i)}_n = 0$ for all $n$ and $Q^{(i)}_n(x_i) = x_i^n$, then $\phi$ is the Boolean product of $\mu_1$ and $\mu_2$.
\item
If $\nu_1 = \delta_1$, $\nu_2 = \mu_2$, then $\phi$ is the monotone product of $\mu_1$ and $\mu_2$ \cite{Franz-Multiplicative-monotone}, while for $\nu_1 = \mu_1$, $\nu_2 = \delta_0$ we get the anti-monotone product.
\end{enumerate}
\end{Example}

\section{Restrictions on states and Hilbert space products}

\begin{Remark}
A weak replacement for associativity of the product in the sense of \cite{SpeUniv} is the following requirement for $\Omega$. Let $\vec{u} \in \mf{FS}(1,2,3)$. It can be written in the form
\[
\vec{u} = (\vec{w}_1 \mb{3}^{i(1)} \vec{w}_2 \ldots \mb{3}^{i(n)} \vec{w}_{n+1}),
\]
with all $\vec{w}_j \in \mf{FS}(1,2)$. We say that $\vec{u} \in \Omega^2$ if each $\vec{w}_j \in \Omega$ and
\[
\mb{1}^{\abs{\vec{w}_1}} \mb{2}^{i(1)} \mb{1}^{\abs{\vec{w}_2}} \ldots \mb{2}^{i(n)} \mb{1}^{\abs{\vec{w}_{n+1}}} \in \Omega.
\]
We say that $\Omega$ is associative if $\Omega^2$ also consists of all
\[
\vec{u} = (\vec{w}_1 \mb{1}^{i(1)} \vec{w}_2 \ldots \mb{1}^{i(n)} \vec{w}_{n+1})
\]
such that each $\vec{w}_j \in \Omega(2,3)$ (defined in the obvious way) and
\[
\mb{2}^{\abs{\vec{w}_1}} \mb{1}^{i(1)} \mb{2}^{\abs{\vec{w}_2}} \ldots \mb{1}^{i(n)} \mb{2}^{\abs{\vec{w}_{n+1}}} \in \Omega.
\]
It is easy to see that all of the universal products satisfy this condition. However, there are many more such sets $\Omega$. One example follows.
\end{Remark}

\begin{figure}[hhh]
\psfig{figure=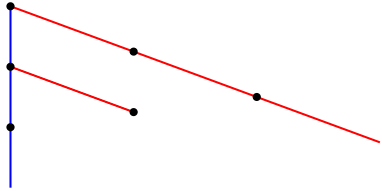,height=0.15\textwidth,width=0.3\textwidth}
\caption{Subtree for the product in Example~\ref{Example:One-branch}}
\end{figure}

\begin{Example}
\label{Example:One-branch}
Let
\[
\Omega = \set{(2,1)} \cup \set{\mb{2}^n, \mb{1}^n: n \geq 0},
\]
so that
\[
\Omega^2 = \set{(2,1), (3,1), (3,2)} \cup \set{\mb{1}^n, \mb{2}^n, \mb{3}^n: n \geq 0}
\]
and $\Omega$ is associative. The corresponding orthogonal polynomials are
\[
P^{(1)}_k(x_1), \quad P^{(2)}_n(x_2), \quad P^{(2)}_1(x_2) P^{(1)}_1(x_1).
\]
One can check that in this case the product state $\phi_\Omega$ satisfies (and is determined by) factorization properties
\[
\phi[\ldots x_2^k x_1^n] = \phi[\ldots x_2^k] \mu_1[x_1^n]
\]
if at least one of $k, n \geq 2$,
\[
\phi[\ldots x_1^n x_2^k] = \phi[\ldots x_1^n] \mu_2[x_2^k]
\]
(which are easy to show) but
\[
\begin{split}
\phi[\ldots x_1^n x_2 x_1]
& = \phi[\ldots x_1^n (P^{(2)}_1(x_2) + \mu_2[x_2])(P^{(1)}_1(x_1) + \mu_1[x_1])] \\
& = \phi[\ldots x_1^n P^{(2)}_1(x_2)] \mu_1[x_1] + \phi[\ldots x_1^n x_1] \mu_2[x_2]
= \phi[\ldots x_1^n x_1] \mu_2[x_2].
\end{split}
\]
Finally, the continued fraction for the moment generating function of $\phi_\Omega$ is
\[
\cfrac{1}
{1 - \beta^{(1)}_0 z_1 -
\cfrac{\gamma_1^{(1)} {z_1 | z_1}}
{1 - \beta^{(1)}_1 z_1 -
\cfrac{\gamma_2^{(1)} {z_1^2}}
{1 - \ldots}
- \beta^{(2)}_0 z_2}
- \beta^{(2)}_0 z_2 -
\cfrac{\gamma_1^{(2)} {z_2^2}}
{1 - \beta^{(2)}_1 z_2 -
\cfrac{\gamma_2^{(2)} {z_2^2}}
{1 - \ldots}}}
\]
Note that if $\beta^{(2)}_0 = 0$ (i.e.\ if $\mu_2[x] = 0$), $\phi_\Omega$ is the same as for the Boolean product.
\end{Example}

\begin{Remark}[Products of Hilbert spaces]
Let $\mc{H}_i = \mf{C} \xi_i \oplus \mc{H}_i^\circ$, $i=1, 2$ be separable Hilbert spaces such that $\mc{H}_i^\circ$ comes with a given orthonormal basis $\set{e^{(i)}_j}$. For any $\Omega$, we can form the product of these spaces $\mc{H}_1 \ast_\Omega \mc{H}_2$ to be the Hilbert space with the orthonormal basis
\[
\set{\xi, e^{(1)}_{u(1)} \otimes e^{(2)}_{v(1)} \otimes \ldots e^{(1)}_{u(n)} \otimes e^{(2)}_{v(n)} : \mb{1}^{u(1)} \mb{2}^{v(1)} \ldots \mb{1}^{u(n)} \mb{2}^{v(n)} \in \Omega}.
\]
In general this product will depend on the choice of the bases, however for special $\Omega$ it may not. Note also that associativity of $\Omega$ is equivalent to the associativity of the corresponding Hilbert space product. 
\end{Remark}

\begin{Prop}
Let $\Omega$ satisfy the conditions of Definition~\ref{Defn:Polynomials}, be associative, and such that the corresponding Hilbert space product is basis-independent. Then $\Omega$ corresponds to one of four non-commutative universal products.
\end{Prop}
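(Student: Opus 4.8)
\emph{The plan is} to strip the problem down to the level of reduced words using basis-independence, and then to extract a short list of numerical constraints by instantiating associativity at a few well-chosen three-colour words. First I would unpack basis-independence. In the Hilbert space product a maximal run $\mb{i}^\ell$ of a single letter is encoded by the \emph{single} basis vector $e^{(i)}_\ell$, so a word $\mb{1}^{u(1)}\mb{2}^{v(1)}\cdots$ contributes the elementary tensor $e^{(1)}_{u(1)}\otimes e^{(2)}_{v(1)}\otimes\cdots$. If the product is to be independent of the chosen orthonormal bases $\set{e^{(i)}_j}$, then for each fixed alternating colour-pattern $\tau=(i_1,\ldots,i_m)$ the span of the admissible tensors must be invariant under arbitrary unitaries on each $\mc{H}_i^\circ$; since the run-lengths are exactly the basis indices, this forces an all-or-nothing dichotomy: either \emph{every} word with colour-pattern $\tau$ lies in $\Omega$, or none does. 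Thus $\Omega$ is described by a set $\bar\Omega$ of alternating patterns; the hereditary hypothesis makes $\bar\Omega$ closed under deleting initial letters, and the first condition of Definition~\ref{Defn:Polynomials} says $(1),(2)\in\bar\Omega$. I would also observe that at the level of patterns the second condition of Definition~\ref{Defn:Polynomials} becomes automatic, matching the remark that it is ``not strictly necessary.''

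Write $[i;m]$ for the length-$m$ alternating pattern beginning with $i$, and set $S_m=\bar\Omega\cap\set{[1;m],[2;m]}$. Suffix-closure yields a frontier lemma: $[i;m]\in\bar\Omega$ forces the length-$(m-1)$ suffix (which starts with the opposite letter) into $\bar\Omega$, so $S_m=\emptyset$ implies $S_{m'}=\emptyset$ for all $m'\ge m$, and $S_1=\set{(1),(2)}$ is forced. It remains to determine $(S_m)_{m\ge 2}$, and this is where associativity enters. The key is to instantiate the equivalence ``$[(12)3\text{-conditions}]\iff[1(23)\text{-conditions}]$'' at small words. For $(1,3,2)$ the $(12)3$ bracketing collapses the two one-letter blocks $(1),(2)$ to the outer pattern $(1,2,1)$ and so requires only $[1;3]\in\bar\Omega$, whereas the $1(23)$ bracketing requires $(3,2)\in\Omega(2,3)$ together with the outer pattern $(1,2)$, i.e.\ $[2;2]\in\bar\Omega$ and $[1;2]\in\bar\Omega$. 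This gives the central identity
\[
[1;3]\in\bar\Omega \iff \bigl([1;2]\in\bar\Omega \ \text{and}\ [2;2]\in\bar\Omega\bigr),
\]
and the mirror word $(2,1,3)$ gives the same equivalence with $[1;3]$ replaced by $[2;3]$.

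These two equivalences dispose of three of the four cases immediately. If neither length-two pattern lies in $\bar\Omega$, then $S_2=\emptyset$ and the frontier lemma gives $\bar\Omega=\set{(1),(2)}$, the Boolean product. If exactly one does, say $[2;2]\in\bar\Omega$ but $[1;2]\notin\bar\Omega$, the identity forces $S_3=\emptyset$, hence $\bar\Omega=\set{(1),(2),(2,1)}$, the monotone product, with the opposite choice giving the anti-monotone product. The remaining case, both length-two patterns present, is the laborious one: here I must show $\bar\Omega$ is everything, i.e.\ $\Omega=\mf{FS}(1,2)$, the free product. For this I would induct on $m$, applying associativity to the family of words $(2,1,3,1,3,\ldots)$ and $(1,2,1,2,\ldots,3)$ together with their reflections; each produces an equivalence of the shape $[i;m]\in\bar\Omega\iff\bigl([\,\cdot\,;2]\in\bar\Omega \ \text{and}\ [\,\cdot\,;m-1]\in\bar\Omega\bigr)$. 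Since the displayed identity makes $S_3$ full, the induction propagates fullness to every $S_m$, giving all patterns.

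I expect the main obstacle to be twofold. The softer point is making the basis-independence reduction fully rigorous for patterns of length $\ge 3$, where a single unitary on $\mc{H}_i^\circ$ acts simultaneously on several tensor slots of the same colour, so one must check the admissible run-length set is still forced to be a full product box rather than merely a subspace. The genuinely computational obstacle is the bookkeeping in the associativity step: for each chosen word one must split correctly at the $3$-runs versus the $1$-runs, carry the relabelling defining $\Omega(2,3)$, and read off both the inner block memberships and the collapsed outer pattern. Selecting the inductive family of words so that every $S_m$ is reached, and so that the resulting equivalence always reduces—via the already-established shorter cases—to ``both length-two patterns present,'' is the part that needs care; the rest is routine suffix-closure bookkeeping.
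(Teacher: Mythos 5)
Your proposal is correct and follows the same two-step strategy as the paper: use basis-independence to reduce membership in $\Omega$ to a question about alternating colour-patterns, then instantiate associativity at three-letter words to propagate membership, ending in the four-way case analysis. The two places where you genuinely diverge are worth recording. First, the paper extracts from basis-independence only the permutation consequence (all $i$-runs of a given length may be simultaneously replaced by runs of another length), which is \emph{not} enough for your all-or-nothing dichotomy per pattern: the set of words $\mb{1}^m\mb{2}^k\mb{1}^n$ with $m\neq n$ is invariant under such simultaneous replacements, and the paper must dispose of the diagonal case $m=n$ by a separate hereditary-plus-associativity argument (producing $(1,\mb{2}^{k-1},1,1)$ from $(1,\mb{3}^{k-1},2,1)\in\Omega^2$). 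Your stronger ``full product box'' reduction, which needs invariance under arbitrary unitaries acting diagonally on repeated slots of the same colour, absorbs that special case--—and it is exactly the point you flag as the soft spot; it is true but is more than the paper proves. Second, in the free case the paper does not run your induction on pattern length: it shows by induction on the number of blocks that every word ending in $1$ lies in $\Omega$, and then observes that this set is not associative (the word $(1,2,3,2,1)$ lies in $\Omega^2$ by the $1(23)$ bracketing but its $(12)3$ bracketing forces $(1,2)\in\Omega$), whence $\Omega=\mf{FS}(1,2)$. Your route via the central identity $[1;3]\in\bar\Omega\iff\bigl([1;2]\in\bar\Omega\text{ and }[2;2]\in\bar\Omega\bigr)$ — which I checked is exactly what the word $(1,3,2)$ yields under the paper's conventions for $\Omega^2$ and $\Omega(2,3)$ — together with the induction $[i;m]\iff\bigl([\,\cdot\,;2]\text{ and }[\,\cdot\,;m-1]\bigr)$ is cleaner and more uniform, and it handles the Boolean and (anti-)monotone cases as degenerate instances of the same identity rather than as separately inspected cases. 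Both arguments leave details at a comparable level of informality, so your version is a legitimate alternative proof rather than a reconstruction of the paper's.
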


\begin{proof}
First note that basis independence allows us to replace any vector $e^{(i)}_j$ with any other $e^{(i)}_k$. This implies that if $\Omega$ contains a word with a consecutive sequence of $i$'s of a certain length, then we can simultaneously replace all sequences of $i$'s of this length in all the words in $\Omega$ by sequences of any other length.

\br
By definition $\Omega$ always contains all $\mb{1}^n, \mb{2}^k$. If it consists only of these sequences, $\phi_\Omega$ is the Boolean product. Otherwise, suppose $\Omega$ it contains one of, hence all, sequences of the form $\mb{2}^k \mb{1}^n$. If it consists only of these sequences, $\phi_\Omega$ is the monotone (or, with $1, 2$ switched, anti-monotone) product. Otherwise, $\Omega$ contains a sequence of the form $\mb{1}^m \mb{2}^k \mb{1}^n$. $k$ is arbitrary. If $n \neq m$, they can be taken to be arbitrary as well. If $n = m > 1$, then by the hereditary property, $\mb{1}^{m-1} \mb{2}^k \mb{1}^n \in \Omega$ and so $m, n$ are again arbitrary. Finally, if $n = m = 1$, $k > 1$, then by associativity
\[
(1, \mb{2}^k, 1), (\mb{2}^{k-1}, 1) \in \Omega \Rightarrow (1, \mb{3}^{k-1}, 2, 1) \in \Omega^2 \Rightarrow (1, \mb{2}^{k-1}, 1, 1) \in \Omega
\]
for which $m=1, n=2$.

\br
Next, we note that any $\mb{1}^{m} \mb{3}^k \mb{1}^i \mb{2}^j \mb{1}^n \in \Omega^2$, and by associativity, $\mb{1}^{m} \mb{2}^k \mb{1}^i \mb{2}^j \mb{1}^n \in \Omega$. Proceeding in this way, we see that any $\vec{u}$ with the rightmost entry $u(n) = 1$ is in $\Omega$. The set of sequences with this property is not associative, since
\[
(1, 2, 2, 2, 1), (1, 2, 1) \in \Omega \Rightarrow (1, 2, 3, 2, 1) \in \Omega^2 \Rightarrow (1, 1, 2, 1, 1), (1, 2), (2,1) \in \Omega.
\]
It follows that $\Omega = \mf{FS}(1,2)$ and $\phi_\Omega$ is the free product.
\end{proof}

\br
\textbf{Acknowledgements.} I would like to thank the organizers of the Workshop on Non-commutative Harmonic Analysis with Applications to Probability for an exciting and enjoyable meeting. I am also grateful to the organizers of the Workshop on Special Functions and Orthogonal Polynomials at FoCM'08, which I attended and which influenced the writing of this note. In particular, I would like to thank Alexander Aptekarev for bringing reference \cite{Skorobogat'ko} to my attention.


\def\cprime{$'$}
\providecommand{\bysame}{\leavevmode\hbox to3em{\hrulefill}\thinspace}
\providecommand{\MR}{\relax\ifhmode\unskip\space\fi MR }
\providecommand{\MRhref}[2]{%
  \href{http://www.ams.org/mathscinet-getitem?mr=#1}{#2}
}
\providecommand{\href}[2]{#2}

\end{document}